\newtheorem{theorem}{Theorem}[section]
\newtheorem{lemma}[theorem]{Lemma}
\newtheorem{proposition}[theorem]{Proposition}
\newtheorem{corollary}[theorem]{Corollary}
\theoremstyle{definition}
\newtheorem{example}[theorem]{Example}
\newtheorem{conjecture}[theorem]{Conjecture}
\newtheorem{remark}[theorem]{Remark}
\newcommand{\g}{\mathfrak{g}}
\numberwithin{equation}{section} 
\begin{document} 

\title{On Lagrangianity of $p$-supports of holonomic D-modules and $q$-D-modules}

\author{Pavel Etingof}

\address{Department of Mathematics, MIT, Cambridge, MA 02139, USA}

\maketitle

\centerline{\bf To Maxim Kontsevich on his 60th birthday with admiration}

\begin{abstract} M. Kontsevich conjectured and T. Bitoun proved that if $M$ is a nonzero holonomic D-module then the $p$-support of a generic reduction of $M$ to characteristic $p>0$ is Lagrangian. We provide a new elementary proof of this theorem and also generalize it to $q$-D-modules. The proofs are based on Bernstein's theorem that any holonomic D-module can be transformed by an element of the symplectic group into a vector bundle with a flat connection, and a $q$-analog of this theorem. We also discuss potential applications to 
quantizations of symplectic singularities and to quantum cluster algebras. 
\end{abstract}

\tableofcontents

\section{Introduction} One of the most important basic results in noncommutative algebra is Gabber's theorem, which in particular implies that the singular support of a nonzero holonomic $D$-module on a smooth variety $X$ over a field $\bold k$ of characteristic zero is Lagrangian. M. Kontsevich conjectured and T. Bitoun proved the following refinement of this theorem: if $M$ is a nonzero holonomic $D$-module over $\bold k$ then the $p$-support of a generic reduction of $M$ to characteristic $p>0$ is Lagrangian. Later Bitoun's proof was simplifed by M. van den Bergh. 

We give a new self-contained and elementary proof of this result based on Bernstein's theorem that a holonomic D-module on $\Bbb A^r$ can be transformed by an element of ${\rm Sp}(2r,\bold k)$ into a vector bundle with a flat connection $\nabla$ (Section 2). This allows us to reduce the problem to checking a Lagrangian property of $\nabla$, which can be expressed in terms of its $p$-curvature and then verified directly. At the end of the section, we discuss 
potential applications to quantizations of conical symplectic singularities. 

Then we generalize this story to $q$-D-modules (Section 3). 
We first generalize Bernstein's theorem to the $q$-case, and then develop a theory of $p$-supports for $q$-D-modules and use the method of Section 2 to prove a $q$-version of Bitoun's theorem.\footnote{Note that since for $q$-D-modules with fixed $q$ there is no notion of singular support (as there is no filtration on the algebra of $q$-difference operators with commutative associated graded algebra), considering the $p$-supports are the only available way to treat such modules semiclassically.} At the end of the section we discuss potential applications 
to quantum cluster algebras.  

{\bf Acknowledgements.} I am very grateful to Vadim Vologodsky and Michel van den Bergh for useful discussions, and also to Gwyn Bellamy, David Jordan, Ivan Losev and Milen Yakimov for discussions related to Subsections \ref{symsing} and \ref{clusalg}. This work was partially supported by the NSF grant DMS-2001318.

\section{Lagrangianity of $p$-supports of a holonomic D-module} 

\subsection{Flat connections and $p$-curvature} 
Let $\bold k$ be a commutative unital ring and $\nabla$ be a connection on the 
trivial vector bundle over $\Bbb A^r$  of rank $n$ defined over $\bold k$.\footnote{Note that by the Quillen-Suslin theorem (formerly Serre's conjecture, see \cite{La}), if $\bold k$ is a field then every vector bundle on $\Bbb A^r$ defined over $\bold k$ is trivial.}  Thus 
$\nabla=(\nabla_1,...,\nabla_r)$, where 
$\nabla_j=\partial_j+a_j(x_1,...,x_r)$, with $a_j\in {\rm Mat}_n(\bold k[x_1,...,x_r]).$
One says that $\nabla$ is {\bf flat} if $[\nabla_i,\nabla_j]=0$, i.e., $\partial_i a_j-\partial_j a_i+[a_i,a_j]=0.$

Let $D_r(\bold k)$ be the algebra of polynomial differential operators on $\Bbb A^r$ defined over $\bold k$.  
Then any flat connection $\nabla$ on $\Bbb A^r$ defines a $D_r(\bold k)$-module $M_\nabla$ 
generated by $f_1,...,f_r$ with defining relations $\nabla \bold f=0$, where $\bold f=(f_1,...,f_r)^T$.

Now suppose $\bold k$ is a field of characteristic $p>0$.  
Then the $p$-{\bf curvature} $C(\nabla)$ of a flat connection $\nabla$ is the collection of operators 
$C_j=\nabla_j^p,\ 1\le j\le r$. Since $[\nabla_j,x_i]=\delta_{ij}$, we have 
$[x_i,C_j]=0$, hence $C_j\in {\rm Mat}_n(\bold k[x_1,...,x_r])$. 
Also we have $[\nabla_i,C_j]=0$, so $[C_i,C_j]=0$ and the coefficients 
of the characteristic polynomial $\chi_{\bold t}$ of $\sum_{j=1}^r t_jC_j$ 
are annihilated by all $\partial_i$. Hence they belong to $\bold k[t_1,...,t_r,X_1,...,X_r]$, where $X_i:=x_i^p$. For example, if $\nabla=\partial+a$ is a rank $1$ connection on $\Bbb A^1$
then $C(\nabla)=\partial^{p-1}a+a^p$. 

\subsection{Lagrangian flat connections in characteristic $p$}\label{lagfla} Let us say that $\nabla$ is {\bf separable} if the eigenvalues of $C_j$ lie in the separable closure $\overline{\bold k(X_1,...,X_r)}_s$. E.g., every flat connection of rank $n<p$ is automatically separable. Let $\nabla$ be separable and suppose $v$ is a common eigenvector of $C_j$, i.e. $C_jv=\Lambda_j v,\ \Lambda_j\in \overline{\bold k(X_1,...,X_r)}_s.$ Recall that the derivations $\frac{\partial}{\partial X_j}$ of $\bold k(X_1,...,X_r)$ extend in a unique way to $\overline{\bold k(X_1,...,X_r)}_s$. Let us say that $\nabla$ is {\bf Lagrangian} if for each such $v$ the 1-form $\Lambda:=\sum_{j=1}^r \Lambda_j dX_j$ is closed, i.e., $\frac{\partial \Lambda_i}{\partial X_j}=\frac{\partial \Lambda_j}{\partial X_i}$. 

The motivation for this terminology is as follows. 
 Recall that the center 
of $D_r(\bold k)$ is 
$Z:=\bold k[X_1,...,X_r,P_1,...,P_r]$, where 
$P_j:=\partial_j^p$. Let ${\rm supp}\nabla$ be the support of $M_\nabla$ as a $Z$-module, a closed subvariety in ${\rm Spec}(Z)=T^*(\Bbb A^r)^{(1)}$. 
It is clear that  ${\rm supp}\nabla$ is given in the coordinates $X_i,P_i$ by the equations
$\chi_{\bold t}(\sum_{j=1}^r t_jP_j)=0$
$\forall \bold t=(t_1,...,t_r)\in \bold k^r$, i.e., it consists of points $(x_1^p,...,x_r^p,\Lambda_1,...,\Lambda_r)$ such that 
$\sum_{j=1}^r t_j\Lambda_j$ is an eigenvalue of $\sum_{j=1}^r t_jC_j(x_1,...,x_r)$ for all $\bold t$. Thus, recalling that the graph in $T^*Y$ of a 1-form $\omega$ on a smooth variety $Y$ is Lagrangian if and only if $\omega$ is closed, we obtain

\begin{proposition}\label{lagcri} $\nabla$ is Lagrangian if and only if ${\rm supp}\nabla$ is a Lagrangian subvariety 
of $T^*(\Bbb A^r)^{(1)}$.
\end{proposition}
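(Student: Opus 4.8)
The plan is to decompose $\mathrm{supp}\,\nabla$ into its irreducible components, to identify each component with the closure of the graph of one of the $1$-forms $\Lambda=\sum_j\Lambda_j\,dX_j$ over a finite separable (hence generically \'etale) cover of the Frobenius-twisted base $(\Bbb A^r)^{(1)}={\rm Spec}\,\bold k[X_1,\dots,X_r]$, and then to invoke the recalled fact that the graph of a $1$-form on a smooth variety is Lagrangian exactly when the form is closed. I would begin by recording that $\mathrm{supp}\,\nabla$ is automatically pure of dimension $r$: as a module over $\bold k[x_1,\dots,x_r]$ the $D$-module $M_\nabla$ is the trivial bundle $\bold k[x_1,\dots,x_r]^{\,n}$, hence free of rank $np^r$ over the central subring $\bold k[X_1,\dots,X_r]$ (over which $\bold k[x_1,\dots,x_r]$ is free with basis the monomials of degree $<p$ in each variable). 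Thus $Z/{\rm Ann}_Z(M_\nabla)$, being a subalgebra of ${\rm End}_{\bold k[X]}(M_\nabla)\cong{\rm Mat}_{np^r}(\bold k[X_1,\dots,X_r])$, is a finite torsion-free $\bold k[X_1,\dots,X_r]$-algebra; hence every irreducible component of $\mathrm{supp}\,\nabla$ dominates $(\Bbb A^r)^{(1)}$, and the generic fibre --- being the support of the finite-dimensional $L[P_1,\dots,P_r]$-module $M_\nabla\otimes_{\bold k[X]}L$, where $L:=\bold k(X_1,\dots,X_r)$ --- is a nonempty finite set of closed points; so every component has dimension $r$.

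Next I would analyze this generic fibre. By separability, the pairwise commuting matrices $C_1,\dots,C_r$, regarded over $L$, are simultaneously triangularizable over $\overline{L}_s$, and their common eigenvalue tuples $(\Lambda_1,\dots,\Lambda_r)\in(\overline{L}_s)^r$ --- the tuples coming from a common eigenvector $v$ with $C_jv=\Lambda_jv$ --- are permuted by ${\rm Gal}(\overline{L}_s/L)$. By the explicit description of $\mathrm{supp}\,\nabla$ recalled before the proposition (its points are the $(X,\Lambda)$ such that $\sum_jt_j\Lambda_j$ is an eigenvalue of $\sum_jt_jC_j$ for all $\bold t$), these tuples, grouped into Galois orbits, are precisely the closed points of the generic fibre. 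Hence the irreducible components $W_1,\dots,W_m$ of $\mathrm{supp}\,\nabla$ are indexed by these orbits: the function field of $W_k$ is a finite separable extension $L_k/L$, and the closed immersion $W_k\hookrightarrow T^*(\Bbb A^r)^{(1)}={\rm Spec}\,Z$ is given at the generic point by $X_j\mapsto X_j$ and $P_j\mapsto\Lambda_j^{(k)}\in L_k$, where $(\Lambda_1^{(k)},\dots,\Lambda_r^{(k)})$ represents the $k$-th orbit.

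It then remains to restrict the symplectic form $\omega=\sum_j dP_j\wedge dX_j$ to $W_k$. Since $L_k/L$ is finite separable, $\Omega^1_{L_k/\bold k}$ is free on $dX_1,\dots,dX_r$ and the derivations $\partial/\partial X_j$ on $L_k$ are the unique extensions of those on $L$; equivalently $W_k\to(\Bbb A^r)^{(1)}$ is generically \'etale, so $X_1,\dots,X_r$ form an \'etale coordinate system near the generic point of $W_k$. Using $dP_j=\sum_i(\partial\Lambda_j^{(k)}/\partial X_i)\,dX_i$ we obtain
\[
\omega|_{W_k}=\sum_j d\Lambda_j^{(k)}\wedge dX_j=\sum_{i<j}\left(\frac{\partial\Lambda_j^{(k)}}{\partial X_i}-\frac{\partial\Lambda_i^{(k)}}{\partial X_j}\right)dX_i\wedge dX_j,
\]
which vanishes if and only if the $1$-form $\Lambda^{(k)}=\sum_j\Lambda_j^{(k)}\,dX_j$ is closed --- this is exactly the ``graph of a $1$-form is Lagrangian iff the form is closed'' statement applied along $W_k\to(\Bbb A^r)^{(1)}$. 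Since $\mathrm{supp}\,\nabla$ is reduced and pure of dimension $r$, it is a Lagrangian subvariety if and only if every $W_k$ is isotropic, i.e.\ if and only if every $\Lambda^{(k)}$ is closed. As the $\Lambda^{(k)}$ exhaust, up to the Galois action (which commutes with the extended derivations and hence preserves closedness), all common eigenvalue tuples of the $C_j$, this last condition is precisely that $\nabla$ is Lagrangian; this proves both implications.

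The one point I expect to require genuine care is the identification in the second step, matching the ``common eigenvector'' description of the $\Lambda$'s with the closed points of the generic fibre of $\mathrm{supp}\,\nabla$. One direction is immediate, since $C_jv=\Lambda_jv$ forces $\sum_jt_j\Lambda_j$ to be an eigenvalue of $\sum_jt_jC_j$ for every $\bold t$; the reverse --- that a tuple $(\Lambda_j)$ with $\sum_jt_j\Lambda_j\in{\rm Spec}(\sum_jt_jC_j)$ for all $\bold t$ must lie in the simultaneous spectrum --- follows from simultaneous triangularization together with the fact that over the infinite field $L$ one can choose $\bold t$ avoiding finitely many hyperplanes. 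The remaining ingredients, namely the dimension count and the differential-form computation, are routine once this dictionary is set up; the only other thing to watch is that it is separability which makes $X_1,\dots,X_r$ a generically \'etale coordinate system on each component, and this is what licenses the computation of $\omega|_{W_k}$.
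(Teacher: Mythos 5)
Your argument is correct and follows the same route the paper takes: the paper treats the proposition as immediate from the description of ${\rm supp}\,\nabla$ as the locus of points $(X,\Lambda)$ with $\sum_j t_j\Lambda_j$ an eigenvalue of $\sum_j t_jC_j$, combined with the fact that the graph of a $1$-form is Lagrangian iff the form is closed. You have simply supplied the details the paper leaves implicit (purity of dimension via finiteness over $\bold k[X_1,\dots,X_r]$, the Galois-orbit indexing of components, and the generically \'etale coordinates justifying the computation of $\omega|_{W_k}$), all of which check out.
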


Analogous definitions and statements apply to connections on the formal polydisk $D^r$, 
with the polynomial algebra $\bold k[x_1,...,x_r]$ replaced by the formal series algebra $\bold k[[x_1,...,x_r]]$. They also apply to connections on a smooth affine variety $U$ equipped with coordinates $x_1,...,x_r$, with the polynomial algebra $\bold k[x_1,...,x_r]$ replaced by $\bold k[U]$. By gluing such charts $U$, the story extends to connections
on arbitrary vector bundles on any smooth variety. 
 
\subsection{Liftable connections} 
In general, a separable flat connection, even one of rank $1$, need not be Lagrangian. 

\begin{example}\label{nonlag} Let $\nabla$ be the flat connection of rank $1$ on $\Bbb A^2$ over $\Bbb F_p$ with 
$$
\nabla_1=\partial_1,\ \nabla_2=\partial_2+x_1^{p}x_2^{ p-1}.
$$  
Then the $p$-curvature of $\nabla$ is $C_1=0,C_2=-X_1+X_1^{p}X_2^{p-1}$, so $\nabla$ is not Lagrangian. 
\end{example}

However, the Lagrangian property holds under some assumptions.   
Namely, say that a flat connection $\nabla$ defined over $\bold k$ is {\bf liftable} if it is the reduction to $\bold k$ of a flat connection defined over 
some local Artinian ring $S$ with residue field $\bold k$ in which $p\ne 0$. 

\begin{lemma}\label{l1} If a flat connection of rank $1$ on a formal polydisk $D^r$ 
is liftable then it is Lagrangian. 
\end{lemma}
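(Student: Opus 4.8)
The plan is to unwind what ``Lagrangian'' means for a rank-$1$ connection, reduce it to the closedness of a single $1$-form, and then obtain that closedness by differentiating the flatness of the lift $p$ times over $S$ and dividing the resulting identity by $p$.

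\smallskip

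\noindent\textbf{Setup and reduction.} Write $\nabla_j=\partial_j+a_j$ with $a_j\in\bold k[[x_1,\dots,x_r]]$ and flatness $\partial_ia_j=\partial_ja_i$. The $p$-curvature is still $C_j=\partial_j^{p-1}(a_j)+a_j^{p}$ (the computation is internal to the $j$th variable), and since $[\nabla_i,C_j]=\partial_i(C_j)=0$ we get $C_j\in\bold k[[X_1,\dots,X_r]]$. As $\nabla$ has rank $1$ it is automatically separable, with the single eigenvalue system $\Lambda_j=C_j$ (eigenvector $1$), so by the definition of Lagrangianity $\nabla$ is Lagrangian iff the $1$-form $\sum_jC_j\,dX_j$ is closed on ${\rm Spec}\,\bold k[[X]]$. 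I would split $C_j=\gamma_j+a_j^{p}$ with $\gamma_j:=\partial_j^{p-1}(a_j)$; since $a_j^{p}\in\bold k[[x^{p}]]=\bold k[[X]]$ we also have $\gamma_j=C_j-a_j^{p}\in\bold k[[X]]$. It then suffices to prove that $\sum_ja_j^{p}\,dX_j$ and $\sum_j\gamma_j\,dX_j$ are each closed.

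\smallskip

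\noindent\textbf{The $a^{p}$-part (no liftability needed).} One checks directly that $\partial_{X_i}(h^{p})=(\partial_ih)^{p}$ for $h\in\bold k[[x]]$ (Frobenius, together with $n^{p}\equiv n$ on exponents). Hence $\partial_{X_i}(a_j^{p})-\partial_{X_j}(a_i^{p})=(\partial_ia_j)^{p}-(\partial_ja_i)^{p}=(\partial_ia_j-\partial_ja_i)^{p}=0$.

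\smallskip

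\noindent\textbf{The $\gamma$-part (the crux).} Two facts drive it. (a) On $\bold k[[X]]=\bold k[[x_1^{p},\dots,x_r^{p}]]$ the derivation $\partial/\partial X_i$ is the restriction of the integral $p$th Hasse--Schmidt derivative $\partial_i^{[p]}$, the operator $x^{\vec n}\mapsto\binom{n_i}{p}x^{\vec n-pe_i}$, which satisfies $\partial_i^{p}=p!\,\partial_i^{[p]}$; this is because $\binom{pm_i}{p}\equiv m_i\pmod p$. (b) Over $\bold k$ itself $\partial_i^{p}=0$, so no information can be extracted by differentiating $p$ times over the base field; one must pass to the lift, where $p\neq0$. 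So let $\tilde a=\sum_j\tilde a_j\,dx_j$ be a flat lift over $S[[x]]$, i.e. $\partial_i\tilde a_j=\partial_j\tilde a_i$ and $\tilde a_j\equiv a_j\pmod{\mathfrak{m}_S}$, and set $\tilde\gamma_j:=\partial_j^{p-1}(\tilde a_j)$, which reduces to $\gamma_j$. For $i\neq j$, using that $\partial_i,\partial_j$ commute and flatness of $\tilde a$ once,
\[
\partial_i^{p}\tilde\gamma_j=\partial_j^{p-1}\partial_i^{p-1}\bigl(\partial_i\tilde a_j\bigr)=\partial_j^{p-1}\partial_i^{p-1}\bigl(\partial_j\tilde a_i\bigr)=\partial_j^{p}\partial_i^{p-1}\tilde a_i=\partial_j^{p}\tilde\gamma_i
\]
as an exact identity in $S[[x]]$. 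Now substitute $\partial_k^{p}=p!\,\partial_k^{[p]}=p\cdot(p-1)!\,\partial_k^{[p]}$; since $(p-1)!\equiv-1\pmod{\mathfrak{m}_S}$ it is a unit in the local ring $S$, so cancelling it gives $p\cdot\bigl(\partial_i^{[p]}\tilde\gamma_j-\partial_j^{[p]}\tilde\gamma_i\bigr)=0$. Because $p\neq0$ in $S$, the annihilator of $p$ is contained in $\mathfrak{m}_S$, hence $\partial_i^{[p]}\tilde\gamma_j-\partial_j^{[p]}\tilde\gamma_i\in\mathfrak{m}_S[[x]]$; reducing mod $\mathfrak{m}_S$ and invoking (a) (legitimate since $\gamma_j\in\bold k[[X]]$) yields $\partial_{X_i}\gamma_j=\partial_{X_j}\gamma_i$, as required.

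\smallskip

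\noindent\textbf{Where the difficulty lies.} The only genuinely delicate point is the conceptual packaging in (a)--(b): one must notice that the identity $\partial_i^{p}\tilde\gamma_j=\partial_j^{p}\tilde\gamma_i$ is vacuous over $\bold k$ yet carries the full Lagrangian property as soon as one may divide it by $p$, which is exactly what the hypothesis $p\neq0$ in $S$ permits, and one must identify $\partial/\partial X_i$ with $\tfrac1{p!}\partial_i^{p}$ reduced mod $\mathfrak{m}_S$. Everything else is routine — in particular no shrinking of $S$ (to, say, $\mathfrak{m}_S^{2}=0$) and no perfectness of $\bold k$ are needed.
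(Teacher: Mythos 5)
Your proof is correct and is essentially the paper's argument: the same decomposition $C_j=\partial_j^{p-1}(a_j)+a_j^{p}$, the Frobenius identity handling the $a^{p}$ part, and for the $\partial^{p-1}a$ part the observation that flatness of the lift yields an exact identity over $S$ equal to $p$ times the desired closedness relation, which one may then divide by $p$ because ${\rm Ann}_S(p)\subset\mathfrak{m}_S$. The only difference is bookkeeping: the paper reduces to $r=2$ and manipulates explicit Taylor coefficients (equation \eqref{zercu} with $i,j$ replaced by $pi,pj$), whereas you package the identical computation via the divided-power operators $\partial_i^{[p]}$ and the relation $\partial_i^{p}=p!\,\partial_i^{[p]}$.
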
 

\begin{proof} It suffices to prove the statement for $r=2$. In this case we have 
$\nabla=(\nabla_x,\nabla_y)$ with 
$\nabla_x=\partial_x+a(x,y)$ and $\nabla_y=\partial_y+b(x,y)$
and lifting $\widetilde\nabla=(\widetilde\nabla_x,\widetilde\nabla_y)$ with 
$\widetilde\nabla_x=\partial_x+\widetilde a(x,y)$ and $\widetilde\nabla_y=\partial_y+\widetilde b(x,y)$. Let $a_{ij},b_{ij},\widetilde a_{ij},\widetilde b_{ij}$ be the coefficients 
of $a,b,\widetilde a,\widetilde b$. The zero curvature equation $\widetilde a_y=\widetilde b_x$
yields 
\begin{equation}\label{zercu}
j\widetilde a_{i-1,j}=i\widetilde b_{i,j-1},\ i,j\ge 1.
\end{equation}
On the other hand, the $p$-curvature of $\nabla$ is $C(\nabla)=(A,B)$, where
$$
A(X,Y)=\partial_x^{p-1}a(x,y)+a^{(1)}(X,Y),\ B(X,Y)=\partial_y^{p-1}b(x,y)+b^{(1)}(X,Y),
$$
where $X=x^p,Y=y^p$, and $a^{(1)},b^{(1)}$ are the Frobenius twists of $a,b$ (all coefficients raised to power $p$). Now, reducing equation \eqref{zercu} modulo $p$ and then raising it to $p$-th power, we get $ja_{i-1,j}^p=ib_{i,j-1}^p,\ i,j\ge 1$, hence 
$$
\partial_Y a^{(1)}(X,Y)=\partial_X b^{(1)}(X,Y).
$$
So it remains to show that 
$$
\partial_Y \partial_x^{p-1}a(x,y)=\partial_X \partial_y^{p-1}b(x,y),
$$
which is equivalent to the equations
$$
ja_{pi-1,pj}=ib_{pi,pj-1},\ i,j\ge 1.
$$
But this follows from \eqref{zercu} by replacing $i$ with $pi$ and $j$ 
with $pj$, as $p\ne 0$ in $S$, so $pS$ surjects onto $\bold k$ as an $S$-module.  
\end{proof} 

On the other hand, a liftable separable flat connection of rank $n\ge p$ need not be Lagrangian. 

\begin{example}\label{LMex} Let $\sigma=(12...p)$ be a cyclic permutation matrix, $L={\rm diag}(0,1,...,p-1)$, so $\sigma^{-1} L\sigma=L+1-pE_{pp}$. Let $\widetilde\nabla$ be the connection of rank $p$ on $\Bbb A^2$ over $\Bbb Z[\sqrt{p}]/p^2$ with 
$$
\widetilde\nabla_1=\partial_1-\sqrt{p}x_1^{p-1}\sigma^{-1}L,\ \widetilde\nabla_2=\partial_2+x_1^px_2^{p-1}(1-pE_{pp})+\sqrt{p} x_2^{p-1}\sigma;
$$ 
it is easy to check that it is flat. But the reduction $\nabla$ of $\widetilde \nabla$ to $\bold k$ is the direct sum of $p$ copies of the connection of Example \ref{nonlag}, 
which is not Lagrangian (even though liftable).
\end{example} 

However, there are no such examples for $n<p$. 

\begin{lemma}\label{c2} Let $\nabla$ be a liftable flat connection of rank $n<p$ on $D^r$. Assume that the joint eigenvalues of the $p$-curvature of $\nabla$ at $0$ and at the generic point of $D^r$ have the same multiplicities. Then $\nabla$ is Lagrangian. 
\end{lemma}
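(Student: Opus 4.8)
The plan is to reduce Lemma \ref{c2} to the rank-$1$ case already handled in Lemma \ref{l1}. The hypothesis that $n<p$ makes $\nabla$ separable, so the $p$-curvature operators $C_1,\dots,C_r$ are simultaneously triangularizable over the separable closure $K:=\overline{\bold k(X_1,\dots,X_r)}_s$. The key point of the multiplicity assumption is that it should force the generalized joint eigenspace decomposition of the $C_j$'s to be "constant" in a suitable sense — i.e., to come from a decomposition defined already over $\bold k[[x_1,\dots,x_r]]$, or at least to be compatible with the flat structure — so that $\nabla$ itself decomposes (after passing to a suitable extension) into blocks, each of which is an extension of liftable rank-$1$ connections.

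First I would set up the local picture: near $0$, the joint eigenvalues of the $p$-curvature are elements $\lambda^{(1)},\dots,\lambda^{(s)}$ (vectors in $K$, or in the residue field, after unramified base change) with multiplicities $m_1,\dots,m_s$, $\sum m_i = n < p$. Because the multiplicities at $0$ and at the generic point agree, the characteristic polynomial $\chi_{\bold t}$ of $\sum t_j C_j$ factors over an étale extension of $\bold k[[X_1,\dots,X_r]]$ into coprime factors of constant degree, one per joint eigenvalue; I would invoke Hensel's lemma / the separability hypothesis to get this factorization and the corresponding direct sum decomposition of the bundle into $\nabla$-stable subbundles $M_\nabla = \bigoplus_i N_i$, where on $N_i$ the $p$-curvature has a single joint eigenvalue $\lambda^{(i)}$. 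This decomposition is liftable because the lift $\widetilde\nabla$ over $S$ has $p$-curvature reducing to that of $\nabla$ and the coprime factorization lifts (the relevant discriminant is a unit). So it suffices to treat each $N_i$ separately, i.e., to assume $\nabla$ has a single joint generalized eigenvalue $\Lambda=(\Lambda_1,\dots,\Lambda_r)$ for its $p$-curvature.

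Next I would twist: subtract a rank-$1$ connection realizing $\Lambda$. Here one must check that the scalar $1$-form $\Lambda = \sum_j \Lambda_j\, dX_j$ is itself of the form $C(\nabla')$ for a liftable rank-$1$ connection $\nabla'$ — this is where Lemma \ref{l1} enters, but applied in reverse: the trace of the whole connection, $\operatorname{tr}\nabla = \partial + \tfrac1n\operatorname{tr}(a)$, is a liftable rank-$1$ connection (it lifts via $\operatorname{tr}\widetilde\nabla$), its $p$-curvature is the average of the joint eigenvalues, and since there is only one joint eigenvalue that average equals $\Lambda$. By Lemma \ref{l1}, $\Lambda$ is closed, which is exactly the Lagrangian condition. (One small subtlety: $n$ must be invertible, which holds since $n<p$.) Actually the cleaner route is: the joint generalized eigenvalue of the $p$-curvature of any connection equals $\tfrac1n$ times the trace of the $p$-curvature, which is the $p$-curvature of $\tfrac1n\operatorname{tr}\nabla$ up to the Frobenius-linearity bookkeeping of the $p$-power map; I would make this identification precise and then quote Lemma \ref{l1}.

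The main obstacle I anticipate is the first step: justifying that the multiplicity hypothesis genuinely yields a $\nabla$-stable, liftable block decomposition over an étale extension, rather than merely a pointwise eigenspace decomposition. Separability ($n<p$) handles the generic fiber, and the equal-multiplicities assumption is precisely what rules out collision/ramification of eigenvalue sheets between $0$ and the generic point — so the factorization of $\chi_{\bold t}$ into coprime pieces is uniform, and Hensel lifts it. But one has to be careful that the extension needed is unramified (so that it lifts to $S$ and the $\tfrac\partial{\partial X_j}$-derivations still extend), and that "joint eigenvalue" decompositions for the commuting family $\{C_j\}$ behave well — this is a standard but slightly delicate commutative-algebra argument that I would write out carefully. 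Once the decomposition is in hand, the reduction to Lemma \ref{l1} via the trace connection is essentially formal.
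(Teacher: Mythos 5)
Your proposal matches the paper's proof in both of its essential steps: the constant-multiplicity hypothesis is used to split $\nabla$ (and its lift over $S$) into blocks with a single joint $p$-curvature eigenvalue --- the paper does this by lifting the orthogonal idempotents onto the generalized eigenbundles to idempotents over $S$ invariant under $\nabla_i^{p^2}$, which is the same device as your Hensel/coprime-factorization argument --- and then the single-eigenvalue case is reduced to Lemma \ref{l1} via a rank-one connection with $p$-curvature $n\Lambda$, the paper using $\wedge^n\nabla$ where you use $\tfrac1n\operatorname{tr}\nabla$ (equivalent, since $n^p=n$ and $n<p$ is invertible). So the proposal is correct and follows essentially the same route.
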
 

\begin{proof} We may assume that $p^2=0$ in $S$. 
Let $\widetilde\nabla$ be a lift of $\nabla$ over $S$. Let 
$\Lambda^{(k)}=\sum_{j=1}^r \Lambda^{(k)}_jdX_j$ be the distinct eigenvalue 1-forms of $C(\nabla)$ and $m_k$ be their multiplicities. The constant multiplicity condition implies that we have a decomposition $\widetilde\nabla\cong \oplus_k \widetilde\nabla^{(k)}$, where $\widetilde\nabla^{(k)}$ is a flat connection of rank $m_k$ whose reduction $\nabla^{(k)}$ modulo $p$ has a single 
$p$-curvature eigenvalue $\Lambda^{(k)}$. Namely, let $\lbrace {\bold e}_k\rbrace$ be the complete system of $C_i=\nabla_i^p$-invariant orthogonal idempotents projecting to its generalized eigen-bundles with eigenvalues $\Lambda^{(k)}$, and let $\lbrace\widetilde {\bold e}_k\rbrace$ be its lift over $S$ to a complete system of orthogonal idempotents invariant under $\nabla_i^{p^2}$; then we $\widetilde\nabla^{(k)}={\rm Im}\widetilde {\bold e}_k$ and $\nabla^{(k)}={\rm Im}{\bold e}_k$.

Since ${\rm supp}(\oplus_k\nabla^{(k)})=\cup_k {\rm supp}\nabla^{(k)}$, we may thus assume without loss of generality that $C(\nabla)$ has a single eigenvalue 1-form $\Lambda$. Then 
$$
C_i(\wedge^n\nabla)=\sum_{j=1}^n 1^{\otimes j-1}\otimes C_i(\nabla)\otimes 1^{\otimes n-j-1}|_{\wedge^n\bold k^n} ={\rm Tr}(C_i(\nabla))=n\Lambda_i.
$$ 
Since the connection $\wedge^n\nabla$ is liftable and has rank $1$ and $n<p$, 
Lemma \ref{l1} implies that $d\Lambda=0$, as desired. 
\end{proof} 

\begin{corollary}\label{c3} If a flat connection $\nabla$ of rank $n<p$ on a smooth variety $X$ defined over $\bold k$ is liftable, then $\nabla$ is Lagrangian. 
\end{corollary}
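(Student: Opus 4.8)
The plan is to deduce Corollary~\ref{c3} from Lemma~\ref{c2} by a local argument, reducing the global statement on $X$ to the formal polydisk. First I would recall that Lagrangianity is a local condition: by the remarks following Proposition~\ref{lagcri}, ${\rm supp}\nabla$ can be checked chart by chart, and the closedness of the eigenvalue $1$-forms of the $p$-curvature is a condition that can be tested on the completion of the local ring at each point. So it suffices to show that the pullback of $\nabla$ to the formal neighborhood $D^r_x$ of every closed point $x\in X$ (in local coordinates $x_1,\dots,x_r$, $r=\dim X$) is Lagrangian.

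Next, I would observe that liftability is preserved under this restriction: if $\widetilde\nabla$ is a lift of $\nabla$ over a local Artinian ring $S$ with residue field $\bold k$ in which $p\ne 0$, then completing at $x$ and tensoring with $S$ gives a lift of the formal connection over $S[[x_1,\dots,x_r]]$, which is exactly the kind of lift Lemma~\ref{c2} requires. The rank is still $n<p$. The one hypothesis of Lemma~\ref{c2} that is not automatic is the constant-multiplicity condition on the joint eigenvalues of the $p$-curvature at $0$ versus at the generic point of $D^r$. To handle this, I would pass to the completion at a \emph{generic} point of each irreducible component of $X$ rather than an arbitrary closed point: since the eigenvalue multiplicities of $C(\nabla)$ are upper semicontinuous and the multiplicity profile is constant on a dense open subset, over that open set the condition of Lemma~\ref{c2} is satisfied at every point. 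This shows ${\rm supp}\nabla$ is Lagrangian over a dense open $U\subseteq X$.

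Finally, I would upgrade ``Lagrangian over a dense open subset'' to ``Lagrangian everywhere.'' Here I would use that ${\rm supp}\nabla$ is a closed conical (in the $P_i$) subvariety of $T^*X^{(1)}$ which is finite over $X^{(1)}$ (the fibers are the joint spectra of the $C_j$, a finite set since the $C_j$ commute and $n<p$ guarantees separability), hence equidimensional of dimension $r=\dim X$. A closed subvariety of $T^*X^{(1)}$ of dimension $r=\tfrac12\dim T^*X^{(1)}$ is Lagrangian if and only if the symplectic form vanishes on its smooth locus; since it vanishes on the smooth locus over the dense open $U$, and the smooth locus over $U$ is dense in each component, the form vanishes on all of ${\rm supp}\nabla$ by continuity. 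Thus ${\rm supp}\nabla$ is Lagrangian, and by Proposition~\ref{lagcri} (applied chart-wise) $\nabla$ is Lagrangian.

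The main obstacle I expect is the bookkeeping around the constant-multiplicity hypothesis: one must be careful that restricting to a generic point really does make the ``eigenvalue at $0$'' and ``eigenvalue at the generic point'' coincide in multiplicity — this works because after localizing at a point where the multiplicity profile is locally constant, the completion sees a ``generic'' fiber at its closed point too. A cleaner alternative, which I would mention, is to avoid Lemma~\ref{c2}'s multiplicity hypothesis entirely by noting that $\wedge^n\nabla$ always has rank $1$, is liftable, and has $p$-curvature $\sum_i \mathrm{Tr}(C_i(\nabla))\,dX_i = \sum_i\big(\sum_k m_k\Lambda^{(k)}_i\big)dX_i$; Lemma~\ref{l1} gives closedness of this particular combination, but to separate out the individual $\Lambda^{(k)}$ one still needs the idempotent decomposition, which in turn needs the constant-multiplicity condition — so the generic-point reduction seems genuinely necessary, and getting that reduction precisely right is the crux.
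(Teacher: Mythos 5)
Your proposal is correct and follows essentially the same route as the paper: restrict to the formal neighborhood of a generic (sufficiently general closed) point, where the constant-multiplicity hypothesis of Lemma~\ref{c2} holds, and apply that lemma. Your final ``dense open implies everywhere'' step is a correct elaboration of something the paper leaves implicit, since the Lagrangian condition as defined concerns eigenvalue $1$-forms with coefficients in $\overline{\bold k(X_1,\dots,X_r)}_s$ and is therefore already a generic-point condition.
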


\begin{proof} It suffices to prove the corollary in the formal neighborhood of a generic point of $X$. But this follows from Lemma \ref{c2}, since the constant multiplicity assumption of this lemma holds at a generic point. 
\end{proof} 

\subsection{Bernstein's theorem} 
Let $\bold k$ be an algebraically closed field and $V$ be a symplectic vector space over $\bold k$ of dimension $2r$. Let $Y\subset V$ be an affine cone of dimension $r$. 

\begin{lemma}\label{l4}(J. Bernstein; \cite{AGM}, Lemma 3.15) (i) There exists a Lagrangian subspace $L\subset V$ 
such that $L\cap Y=\lbrace 0\rbrace$. 

(ii) If $V=E\oplus E^*$ with the standard symplectic form, then there exists 
$g\in {\rm Sp}(V)$ such that the natural projection $gY\to V/E^*\cong E$ is finite.  
\end{lemma}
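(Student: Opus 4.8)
The plan is to prove both parts simultaneously by a dimension-counting argument on a suitable incidence variety of Lagrangian subspaces. First I would observe that part (ii) follows from part (i): given the Lagrangian complement $L$ to $E^*$ with $L\cap Y=\{0\}$, since $L$ and $E$ are both Lagrangian the symplectic group acts transitively on such pairs, so there is $g\in{\rm Sp}(V)$ with $gL=E$; wait, I should be more careful—one wants $E^*\cap g^{-1}$... Let me instead argue directly: choose a Lagrangian $L$ with $L\cap Y=\{0\}$; by Witt's theorem (or transitivity of ${\rm Sp}(V)$ on Lagrangians) there is $g\in{\rm Sp}(V)$ with $gL=E^*$, hence $gY\cap E^*=\{0\}$. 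Then the projection $gY\to V/E^*\cong E$ has finite (indeed trivial) fibers over $0$, and since $gY$ is a cone and the map is linear-homogeneous, the fiber over every point is finite; being a morphism of affine varieties with finite fibers from a cone, together with $\dim gY=r=\dim E$, it is finite by the standard fact that a quasi-finite morphism of affine cones respecting the grading is finite (the coordinate ring of $gY$ is a finitely generated module over that of $E$ because it is generated in degree zero over a finitely generated ring, using graded Nakayama). So the heart of the matter is part (i).

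For part (i), the strategy is to show that a \emph{generic} Lagrangian subspace $L$ meets $Y$ only at $0$. Consider the Lagrangian Grassmannian $\mathrm{LG}(V)$, which is smooth of dimension $\binom{r+1}{2}$, and let $Y^\circ=Y\setminus\{0\}$. Form the incidence variety
$$
Z=\{(L,y)\in \mathrm{LG}(V)\times Y^\circ : y\in L\}.
$$
The key step is to bound $\dim Z$ by projecting to the second factor $Y^\circ$: the fiber over a fixed nonzero $y$ is the set of Lagrangians containing the line $\mathbf k y$. Since $y$ is nonzero and the symplectic form is nondegenerate, this fiber is isomorphic to the Lagrangian Grassmannian of the $(2r-2)$-dimensional symplectic space $y^\perp/\mathbf k y$, which has dimension $\binom{r}{2}$. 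Hence
$$
\dim Z = \dim Y^\circ + \binom{r}{2} = r + \binom{r}{2} - 1 = \binom{r+1}{2}-1 < \dim \mathrm{LG}(V).
$$
Therefore the first projection $Z\to\mathrm{LG}(V)$ is not dominant, so its image is a proper closed subset, and any $L$ outside this image satisfies $L\cap Y^\circ=\varnothing$, i.e., $L\cap Y=\{0\}$, proving (i).

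The main obstacle I anticipate is not the dimension count itself but the verification that the fiber of $Z\to Y^\circ$ over $y$ really has the asserted dimension uniformly, and that $Z$ is genuinely a variety of the expected dimension rather than having lower-dimensional components that could conspire to make the projection dominant—here one uses that $Z\to Y^\circ$ is a fiber bundle (Zariski-locally trivial, as the stabilizer of a line acts transitively on Lagrangians through it), so $Z$ is irreducible of dimension exactly $\dim Y^\circ+\binom{r}{2}$ once $Y^\circ$ is. If $Y$ is reducible one simply runs the argument on each component and takes $L$ avoiding the (finite) union of the resulting proper closed subsets of $\mathrm{LG}(V)$. A secondary point to handle carefully in (ii) is the passage from "finite fibers" to "finite morphism," which is where one needs the cone structure and graded finiteness rather than a general Zariski's main theorem argument (since the source need not be normal).
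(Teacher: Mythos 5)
Your approach is the same as the paper's (incidence variety of pairs $(y,L)$ with $y\in L$, bound its dimension by fibering over the $Y$-factor, conclude the projection to $\mathrm{LG}(V)$ is not dominant, then deduce (ii) by moving $L$ to $E^*$ and using the graded/cone argument for finiteness). The paper's proof of (ii) is exactly your second paragraph's first half, stated more tersely; your elaboration of why set-theoretic properness over $0$ plus the cone structure gives finiteness (graded Nakayama) is correct and fills in a step the paper leaves implicit.

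There is, however, an arithmetic slip in your part (i) that as written invalidates the dimension count. You define the incidence variety over $Y^\circ=Y\setminus\{0\}$, which has dimension $r$ (it is a dense open subset of the $r$-dimensional cone $Y$), not $r-1$. So with your $Z$ one gets $\dim Z = r+\binom{r}{2}=\binom{r+1}{2}=\dim\mathrm{LG}(V)$, and non-dominance of $Z\to\mathrm{LG}(V)$ does \emph{not} follow from the dimension count alone. The value $\binom{r+1}{2}-1$ you wrote is the dimension of the incidence variety over $\mathbb{P}Y$ (which has dimension $r-1$), and that is exactly what the paper uses: its $X\subset\mathbb{P}Y\times\mathrm{LGr}(V)$ has fibers $\mathrm{LGr}(y^\perp/y)\cong\mathrm{LGr}_{r-1}$ over $\mathbb{P}Y$, giving $\dim X=(r-1)+\frac{r(r-1)}{2}=\frac{r(r+1)}{2}-1$. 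So either projectivize $Y$ as the paper does, or keep your $Z$ but add the observation that every nonempty fiber of $Z\to\mathrm{LG}(V)$ contains a punctured line $\mathbf{k}^\times y$ and hence has dimension $\ge 1$, so the image still has dimension $\le\binom{r+1}{2}-1$. Either repair is one line; the rest of your argument (including the remarks about running the count on each irreducible component, and only needing an upper bound on fiber dimensions) is fine.
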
 

\begin{proof} (i) Let ${\rm LGr}(V)$ be the Lagrangian Grassmannian of $V$, 
then 
$$
{\rm LGr}(V)\cong {\rm LGr}_r:={\rm Sp}(2r,\bold k)/(GL(r,\bold k)\ltimes S^2\bold k^r),
$$ 
so $\dim {\rm LGr}(V)=\dim {\rm LGr}_r=\frac{r(r+1)}{2}$. Consider the closed subvariety 
$X\subset \Bbb PY\times {\rm LGr}(V)$ consisting of pairs $(y,L)$, $y\in \Bbb PY$, $L\in {\rm LGr}(V)$ such that $y\subset L$. The fiber $p^{-1}(y)$ of the projection $p: X\to \Bbb PY$ consists of all $L\in {\rm LGr}(V)$ containing $y$, so it is isomorphic to ${\rm LGr}(y^\perp/y)\cong {\rm LGr}_{r-1}$ via $L\mapsto L/y$. Thus we get 
$$
\dim X=\dim \Bbb PY+\dim {\rm LGr}_{r-1}=r-1+\frac{r(r-1)}{2}=\frac{r(r+1)}{2}-1.
$$ 
So the projection $\pi: X\to {\rm LGr}(V)$ is not surjective for dimension reasons, i.e., there exists $L\in {\rm LGr}(V)$ not containing any $y\in \Bbb PY$. Then $L\cap Y=\lbrace 0\rbrace$.  

(ii) By (i) there is a Lagrangian subspace $L\subset V$ such that $L\cap Y=\lbrace 0\rbrace$. 
There exists $g\in {\rm Sp}(V)$ such that $gL=E^*$. 
Then $E^*\cap gY=\lbrace 0\rbrace$, so the projection $gY\to V/E^*$ is finite. 
\end{proof} 

Now assume that ${\rm char}(\bold k)=0$ and let $M$ be a nonzero holonomic $D_r(\bold k)$-module. The algebra $D_r(\bold k)$ is the Weyl algebra of the symplectic space $V=\bold k^r\oplus (\bold k^r)^*=\bold k^{2r}$, hence the group ${\rm Sp}(2r,\bold k)$ acts on $D_r(\bold k)$ by automorphisms. 
For $g\in {\rm Sp}(2r,\bold k)$ let $gM$ be the twist of  
$M$ by the automorphism $g$ of $D_r(\bold k)$. 
 
\begin{theorem}\label{c5} (J. Bernstein; \cite{AGM}, Corollary 3.16) 
There exists $g\in {\rm Sp}(2r,\bold k)$ such that $gM$
is $\mathcal O$-coherent (a vector bundle on $\bold k^r$ with a flat connection).  
\end{theorem}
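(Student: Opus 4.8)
The plan is to reinterpret $\mathcal O$-coherence of a $D_r(\bold k)$-module as a geometric condition on its singular support and then apply Lemma \ref{l4}(ii). The one structural point to get right is which filtration to use: since $\mathrm{Sp}(2r,\bold k)$ acts linearly on $V=\bold k^r\oplus(\bold k^r)^*$, it preserves the Bernstein filtration on $D_r(\bold k)$ (in which every $x_i$ and every $\partial_i$ has degree $1$), with $\mathrm{gr}\,D_r(\bold k)=\bold k[x,\xi]=\mathcal O(V)$ and induced action the linear one; the order filtration, by contrast, is not $\mathrm{Sp}$-stable. So I would work throughout with the Bernstein filtration.

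First, recall the standard dictionary. Fix a good Bernstein filtration on $M$; then $\mathrm{gr}\,M$ is a finitely generated $\bold k[x,\xi]$-module whose support $Y:=\mathrm{SS}(M)\subset V$ is a conical subvariety, and since $M$ is nonzero and holonomic, $\dim Y=r$ (Bernstein's inequality gives $\ge r$, holonomicity gives $\le r$). For $g\in\mathrm{Sp}(2r,\bold k)$ this filtration transports to a good Bernstein filtration on $gM$ with $\mathrm{gr}(gM)=\mathrm{gr}\,M$ as a module but with the $\bold k[x,\xi]$-action twisted by $g$; hence $\mathrm{SS}(gM)=gY$ (replacing $g$ by $g^{-1}$ if the convention requires). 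I would then apply Lemma \ref{l4}(ii) with $E=\bold k^r$ the base directions and $E^*=(\bold k^r)^*$ the fibre (momentum) directions: there is $g$ for which the projection $\mathrm{SS}(gM)\to V/E^*\cong E=\mathrm{Spec}\,\bold k[x_1,\dots,x_r]$ is finite. Replace $M$ by $gM$.

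It remains to show that finiteness of $Z:=\mathrm{SS}(M)\to E$ forces $M$ to be $\mathcal O$-coherent. The hypothesis says $\bold k[x]\to\mathcal O(Z)$ is a finite ring map. Since $\mathrm{gr}\,M$ is a Noetherian $\bold k[x,\xi]$-module with support $Z$, some power $I(Z)^N$ annihilates $\mathrm{gr}\,M$; filtering $\bold k[x,\xi]/I(Z)^N$ by powers of $I(Z)$ and using that each $I(Z)^i/I(Z)^{i+1}$ is a finitely generated $\mathcal O(Z)$-module, one gets that $\bold k[x,\xi]/I(Z)^N$, hence also $\mathrm{gr}\,M$, is finite over $\bold k[x]$. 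The symbols of $x_1,\dots,x_r$ generate the subring $\mathrm{gr}\,\bold k[x]\subset\mathrm{gr}\,D_r(\bold k)$, so $\mathrm{gr}\,M$ is finite over $\mathrm{gr}\,\bold k[x]$, and then the usual lifting lemma (lift homogeneous generators of $\mathrm{gr}\,M$ to $M$, induct on filtration degree) shows $M$ is finite over $\bold k[x_1,\dots,x_r]=\mathcal O(\Bbb A^r)$. Thus $M$ is $\mathcal O$-coherent, and since a coherent $\mathcal O$-module carrying an integrable connection on a smooth variety is automatically locally free, $M$ is a vector bundle with a flat connection --- trivial on $\Bbb A^r$ by Quillen--Suslin, matching the framework of Subsection~\ref{lagfla}.

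The geometric heart, namely producing a Lagrangian $E^*$ meeting the conical characteristic variety only at the origin, is exactly Lemma \ref{l4}, which we are allowed to assume. The step I expect to demand the most care --- though it is only technical --- is the bridge from that commutative picture to the filtered-module statement: transporting good Bernstein filtrations under the $\mathrm{Sp}$-action, the nilpotent-thickening argument for ``support finite over $E$ implies module finite over $\mathcal O(E)$'', and the $\mathrm{gr}$-to-module lifting. Each of these is routine, but the argument genuinely breaks if one uses the order filtration, so fixing the Bernstein filtration at the outset is the key decision.
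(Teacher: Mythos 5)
Your proposal is correct and follows essentially the same route as the paper: work with the Bernstein filtration, identify the arithmetic singular support $Y$ as an $r$-dimensional affine cone, apply Lemma \ref{l4}(ii) to make the projection $gY\to\bold k^r$ finite, and deduce that ${\rm gr}_FM$ and hence $M$ is finite over $\mathcal O(\bold k^r)$. The extra details you supply (transport of good filtrations under the ${\rm Sp}$-action, the nilpotent-thickening argument, and the lifting from ${\rm gr}$ to $M$) are exactly the routine steps the paper leaves implicit.
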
 

\begin{proof} Let $Y:=SS_a(M)\subset V$ be the arithmetic singular support of $M$, i.e. the support of ${\rm gr}_FM$, where $F$ is a good filtration on $M$ with respect to the Bernstein filtration on $D_r(\bold k)$ ($\deg(x_j)=\deg(\partial_j)=1$). Then $Y$ is an affine cone of dimension $r$. Let $g$ be chosen as in Lemma \ref{l1}(ii). Then the projection $gY\to \bold k^r$ is finite, so ${\rm gr}_FM$ and hence $M$ is finitely generated as an $\mathcal O(\bold k^r)$-module.  
\end{proof} 

\subsection{Bitoun's theorem} 
Now let $\bold k$ be an algebraically closed field of characteristic $0$ and
$M$ be a finitely generated 
$D_r(\bold k)$-module. Pick a finitely generated subring $R\subset \bold k$ and a finitely generated $D_r(R)$-module $M_R$ with an isomorphism $\bold k\otimes_R M_R\cong M$. For every prime $p$ which is not a unit in $R$ 
and a maximal ideal $\mathfrak m\subset R$ lying over $p$ (i.e., the preimage of a maximal ideal $\overline{\mathfrak m}\subset R/pR$) with residue field 
$k_{\mathfrak m}=R/\mathfrak m$ let $M_{\mathfrak m}$ be the finitely generated $D_r(k_{\mathfrak m})$-module $k_{\mathfrak m}\otimes_R M$. Let $Z_{\mathfrak m}$ be the center of $D_r(k_{\mathfrak m})$. 
The support ${\rm supp}(M_{\mathfrak{m}})$ of $M_{\mathfrak m}$ in ${\rm Spec}(Z_{\mathfrak m})$ is called the $p$-{\bf support} of $M$ at $\mathfrak{m}$.

The following result was conjectured by M. Kontsevich (\cite{K}, Conjecture 2) and proved by T. Bitoun \cite{B}. The proof was later simplified by M. van den Bergh \cite{vdB}. 

\begin{theorem}\label{t6} If $M\ne 0$ is a holonomic module then the $p$-support ${\rm supp}(M_{\mathfrak{m}})$ is Lagrangian for generic $\mathfrak m\in {\rm Spec}(R)$.
\end{theorem}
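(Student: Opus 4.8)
The plan is to deduce Theorem \ref{t6} from Corollary \ref{c3} together with Bernstein's Theorem \ref{c5}. Using Theorem \ref{c5}, choose $g\in {\rm Sp}(2r,\bold k)$ so that $gM$ is $\mathcal O$-coherent. After enlarging the model ring $R$ (which remains a finitely generated domain of characteristic $0$) I may assume that $g\in {\rm Sp}(2r,R)$ and that $gM_R$ is $\mathcal O$-coherent over $R[x_1,\dots,x_r]$, i.e.\ equals $M_\nabla$ for a flat connection $\nabla$ on a vector bundle of some rank $n$ over $R[x_1,\dots,x_r]$; the latter is a generic local freeness statement and holds after inverting one element of $R$.

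Next I would show that the twist by $g$ does not affect the conclusion. For a maximal ideal $\mathfrak m\subset R$ lying over a prime $p$, the reduction $g_{\mathfrak m}\in {\rm Sp}(2r,k_{\mathfrak m})$ acts on $D_r(k_{\mathfrak m})$ and hence on its center $Z_{\mathfrak m}=k_{\mathfrak m}[X_1,\dots,X_r,P_1,\dots,P_r]$. A short Jacobson-formula computation, valid for $p$ odd, shows that for a linear element $u=\sum_i(\alpha_ix_i+\beta_i\partial_i)$ of $D_r(k_{\mathfrak m})$ one has $g_{\mathfrak m}(u^p)=g_{\mathfrak m}(u)^p$ with $u^p=\sum_i(\alpha_i^pX_i+\beta_i^pP_i)$, so the induced automorphism of ${\rm Spec}(Z_{\mathfrak m})=T^*(\Bbb A^r)^{(1)}$ is the Frobenius twist of the linear symplectic action of $g_{\mathfrak m}$, in particular a linear symplectomorphism. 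Since reduction modulo $\mathfrak m$ commutes with the twist by $g$, the $p$-support ${\rm supp}(M_{\mathfrak m})$ is the image under this symplectomorphism of ${\rm supp}((gM)_{\mathfrak m})={\rm supp}(\nabla_{\mathfrak m})$, where $\nabla_{\mathfrak m}$ denotes the reduction of $\nabla$; hence one is Lagrangian if and only if the other is, and it suffices to prove that $\nabla_{\mathfrak m}$ is Lagrangian for generic $\mathfrak m$.

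Finally I would verify the hypotheses of Corollary \ref{c3} for every $\mathfrak m$ lying over a prime $p>\max(2,n)$, a dense open condition on ${\rm Spec}(R)$ (the complement of the vanishing locus of the nonzero integer $\prod_{p\le \max(2,n)}p$, together with the inversions already made). For such $\mathfrak m$ the rank of $\nabla_{\mathfrak m}$ is $n<p$, and $\nabla_{\mathfrak m}$ is liftable: $R_{\mathfrak m}$ is a Noetherian local domain of characteristic $0$ with residue field $k_{\mathfrak m}$ and $p\ne 0$, so by Krull's intersection theorem $p\notin \mathfrak m^NR_{\mathfrak m}$ for some $N\ge 1$; then $S:=R_{\mathfrak m}/\mathfrak m^NR_{\mathfrak m}$ is a local Artinian ring with residue field $k_{\mathfrak m}$ and $p\ne 0$ in $S$, and base change of $\nabla$ along $R[x_1,\dots,x_r]\to S[x_1,\dots,x_r]$ gives a flat connection over $S$ reducing to $\nabla_{\mathfrak m}$. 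Corollary \ref{c3} then gives that $\nabla_{\mathfrak m}$ is Lagrangian, and Proposition \ref{lagcri} gives that ${\rm supp}(\nabla_{\mathfrak m})$, hence ${\rm supp}(M_{\mathfrak m})$, is Lagrangian.

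All the substantive work has already been carried out in Corollary \ref{c3} (through Lemmas \ref{l1} and \ref{c2}), so there is no genuinely new obstacle at this point: the key observations are that liftability of $\nabla_{\mathfrak m}$ is automatic, being a reduction from characteristic $0$, and that the ${\rm Sp}(2r)$-twist commutes with reduction modulo $\mathfrak m$ and acts on the center by symplectomorphisms. I would expect the most delicate bookkeeping to be the spreading-out steps and the verification that the induced action on the center is exactly the Frobenius twist of the linear action, which is what forces one to exclude $p=2$ along with $p\le n$. Passing through the $\mathcal O$-coherent case via ${\rm Sp}(2r)$ is essential rather than a convenience, since by Example \ref{LMex} a general liftable separable flat connection of rank $\ge p$ need not be Lagrangian.
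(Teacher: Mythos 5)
Your proposal is correct and follows exactly the paper's route: reduce to the $\mathcal O$-coherent case via Bernstein's Theorem \ref{c5} and then invoke Corollary \ref{c3}. The paper's proof is just these two sentences, leaving implicit the spreading-out, the compatibility of the ${\rm Sp}(2r)$-twist with the $p$-center (Frobenius twist of the linear action), and the construction of the Artinian lift from $R_{\mathfrak m}$ -- all of which you supply correctly.
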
 

\begin{example} If $\bold k=\overline{\Bbb Q}$ then there are finitely many 
$\mathfrak{m}$ over each prime $p$, so Theorem \ref{t6} holds for almost all $p$.  
\end{example} 

\begin{proof} By Theorem \ref{c5}, it suffices to prove the theorem when $M$ is a vector bundle with a flat connection $\nabla$. In this case, 
the result follows from Corollary \ref{c3}. 
\end{proof} 

As explained in \cite{B},\cite{vdB},\ Theorem \ref{t6} implies its generalization to any smooth variety, which is the formulation conjectured in \cite{K} and proved in \cite{B}. 

\begin{corollary}\label{t7} (\cite{B}) If $X$ is any smooth variety over $\bold k$ and $M$ a nonzero holonomic $D(X)$-module, 
then $\mathcal Z_{\mathfrak m}:={\rm supp}(M_{\mathfrak m})\subset T^*X^{(1)}$ is Lagrangian for generic $\mathfrak{m}\in {\rm Spec}(R)$. 
\end{corollary}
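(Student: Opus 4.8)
The plan is to reduce the statement on an arbitrary smooth variety $X$ to the affine-space case already settled in Theorem \ref{t6}, using the fact that the question is local on $X$ and that the $p$-support, being the support of a module over the center $Z_{\mathfrak m}$ of $D(X_{\mathfrak m})$, is compatible with restriction to open subsets. First I would cover $X$ by finitely many affine open sets $U_i$ each admitting an étale coordinate map $U_i\to \Bbb A^r$ (equivalently, each carrying functions $x_1,\dots,x_r$ whose differentials form a basis of $\Omega^1_{U_i}$); such a cover exists since $X$ is smooth of dimension $r$. Choosing the model $R$ large enough, these étale charts and the coherent $D_{U_i}(R)$-modules $M_R|_{U_i}$ all spread out over $\operatorname{Spec}R$, and for generic $\mathfrak m$ the reductions $(U_i)_{\mathfrak m}$ are still smooth and the maps to $\Bbb A^r$ still étale.

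Next I would note that Lagrangianity of a closed subvariety of $T^*X^{(1)}_{\mathfrak m}$ can be checked on an open cover: a pure $r$-dimensional closed subvariety $\mathcal Z_{\mathfrak m}\subset T^*X^{(1)}_{\mathfrak m}$ is Lagrangian iff $\mathcal Z_{\mathfrak m}\cap T^*U_i{}^{(1)}_{\mathfrak m}$ is Lagrangian for each $i$ (the symplectic form restricts compatibly, and every point of $\mathcal Z_{\mathfrak m}$ lies over some $U_i$). Moreover $\mathcal Z_{\mathfrak m}\cap T^*U_i{}^{(1)}_{\mathfrak m}$ is precisely the $p$-support of the restricted module $M_{\mathfrak m}|_{U_i}$, which is a nonzero holonomic $D(U_i)$-module whenever $M_{\mathfrak m}|_{U_i}\ne 0$ (and since $M\ne 0$, $M_{\mathfrak m}\ne 0$ for generic $\mathfrak m$, so at least one restriction is nonzero; the zero ones contribute nothing). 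Finally, an étale map $\phi\colon U_i\to \Bbb A^r$ induces an isomorphism of the center $Z$ of $D(U_i)_{\mathfrak m}$ with $\phi^*$ of the center of $D_r(k_{\mathfrak m})$ and a symplectomorphism $T^*U_i{}^{(1)}_{\mathfrak m}\cong U_i^{(1)}\times_{\Bbb A^{r(1)}} T^*\Bbb A^{r(1)}_{\mathfrak m}$, under which the $p$-support of $M_{\mathfrak m}|_{U_i}$ corresponds to the base change of the $p$-support of the pushforward (or, better, of the corresponding $D_r$-module obtained by descent). Thus Lagrangianity on $U_i$ follows from Theorem \ref{t6} applied on $\Bbb A^r$, since étale base change preserves the Lagrangian condition.

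I would organize the argument as: (1) produce the étale affine cover and spread everything out over $R$; (2) for generic $\mathfrak m$, simultaneously ensure smoothness of $X_{\mathfrak m}$, étaleness of the charts, purity and correct dimension of $\mathcal Z_{\mathfrak m}$ (the latter holds because holonomicity is preserved under generic reduction, a standard fact which one may also cite from \cite{B},\cite{vdB}), and nonvanishing of $M_{\mathfrak m}$; (3) invoke the local-on-$X$ nature of Lagrangianity together with the étale-local identification of $p$-supports to deduce the result from the affine case.

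**The main obstacle** is step (2), and specifically the bookkeeping needed to guarantee that a single generic $\mathfrak m$ works for all the finitely many charts at once and that the reduced $p$-support has the expected pure dimension $r$ — i.e., that generic reduction does not create extra components or drop dimension. This is exactly the kind of "spreading out" argument where one must be a little careful, but it is routine and is already carried out in \cite{B} and \cite{vdB}; as the excerpt indicates, Theorem \ref{t6} "implies its generalization to any smooth variety" precisely by this mechanism, so I would present the reduction cleanly and refer to those sources for the finer constructibility statements.
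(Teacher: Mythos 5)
Your reduction is correct in outline but takes a genuinely different route from the paper. The paper also first reduces to affine $X$ (Lagrangianity being local on $X$), but then uses a \emph{closed} embedding $i\colon X\hookrightarrow E=\Bbb A^N$ and the D-module direct image $i_*M$: the $p$-support of $(i_*M)_{\mathfrak m}$ is \emph{exactly} $\iota(\pi^{-1}(\mathcal Z_{\mathfrak m}))$, where $\pi$ collapses the conormal directions of $X^{(1)}\subset E^{(1)}$. This explicit correspondence transfers isotropy in both directions and simultaneously forces every component of $\mathcal Z_{\mathfrak m}$ to have dimension exactly $\dim X$, so no separate purity input is needed. Your étale-chart route instead pushes $M|_{U_i}$ forward along a non-finite map $\phi\colon U_i\to\Bbb A^r$, and two steps there need more care than you give them. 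First, $\phi_*(M|_{U_i})$ is $M|_{U_i}$ viewed as a $D_r$-module by restriction of scalars, and its holonomicity is the (standard but not free) preservation of holonomicity under direct images. Second, the $p$-support of $M_{\mathfrak m}|_{U_i}$ is in general only \emph{contained in} the preimage, under the étale symplectic map $T^*U_i^{(1)}\to T^*\Bbb A^{r,(1)}$, of the $p$-support of the pushed-forward module — it is not equal to its base change, since the preimage can pick up extra sheets of $\phi$. Containment still yields isotropy, but then you genuinely need the purity/dimension statement you propose to import from \cite{B},\cite{vdB} to conclude Lagrangianity, whereas the closed-embedding argument is self-contained on this point. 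In short: both approaches work, but the paper's is the more economical one for affine $X$; yours trades that economy for an étale-local formulation that does not require choosing an embedding.
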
 
 
\begin{proof} It suffices to treat the case of affine $X$. Pick a closed embedding $i: X\hookrightarrow E=\Bbb A^r$ defined over $R$ and let $\widetilde{\mathcal Z}_{\mathfrak{m}}:=\iota(\pi^{-1}({\mathcal Z}_{\mathfrak{m}}))$, 
where $$\pi:  X^{(1)}\times E^{(1)*}\twoheadrightarrow T^*X^{(1)},\quad \iota=i^{(1)}\times {\rm id}:  X^{(1)}\times E^{(1)*}\hookrightarrow  E^{(1)}\times E^{(1)*}$$ are the natural maps. Then 
${\rm supp}((i_*M)_{\mathfrak{m}})=\widetilde{\mathcal Z}_{\mathfrak{m}}$, so it is Lagrangian  
by Theorem \ref{t6}. Hence so is $\mathcal Z_{\mathfrak{m}}$. 
\end{proof} 

Note that Corollary \ref{t7} implies Gabber's theorem that the 
singular support $SS(M)$ of a nonzero holonomic D-module $M$ on any smooth variety is Lagrangian, since the singular support of $M$ is obtained by conical degeneration of the $p$-support for large $p$. 

\begin{remark}\label{adddif} Let ${\rm char}(\bold k)=0$ and $\mathcal D_r^{\rm add}(\bold k)$ be the algebra of additive difference operators, generated by $x_i,T_i^{\pm 1}$, $1\le i\le r$, with defining relations 
$$
[x_i,x_j]=[T_i,T_j]=0,\ T_ix_j=(x_j+\delta_{ij})T_i.
$$
This algebra acts naturally on $\bold k[x_1,...,x_r]$, with $x_i$ acting by multiplication 
and $T_i$ acting by translation operators
$$
(T_if)(x_1,...,x_r)=f(x_1,...,x_i+1,...,x_r).
$$ 
The algebra $\mathcal D_r^{\rm add}(\bold k)$ is isomorphic
to the algebra $D((\bold k^\times)^r)$ of differential operators on the torus
$(\bold k^\times)^r$ with coordinates $y_i$, via $T_i\mapsto y_i,x_i\mapsto -y_i\frac{\partial}{\partial y_i}$.  Thus Corollary \ref{t7} applies directly to $\mathcal D_r^{\rm add}(\bold k)$-modules.
\end{remark} 

\begin{remark} Besides providing a new proof of Bitoun's theorem, our approach 
provides a simplification of its previous proofs of \cite{B},\cite{vdB}. Namely, both proofs rely on Theorem 5.3.2 of \cite{B} (Theorem 3.4 in \cite{vdB}), which 
is easy for $\mathcal{O}$-coherent D-modules $M$ (the number $e(M)$ is just the rank of the vector bundle). Thus this theorem follows immediately from 
Theorem \ref{c5}. 
\end{remark} 

\subsection{$p$-supports of holonomic modules over quantized conical symplectic singularities}\label{symsing}

Recall that a {\it conical symplectic singularity} over $\Bbb C$ is an irreducible affine conical Poisson $\Bbb C$-variety $X$ such that $X$ is normal with symplectic smooth locus $X^\circ$ and its symplectic form extends to a regular 2-form on some (equivalently, any) resolution of singularities of $X$ (\cite{Be}). Namikawa showed that $X$ is rigid, hence is 
always defined over $\overline{\Bbb Q}$ (\cite{N1}), and Kaledin showed that $X$ has finitely many symplectic leaves (\cite{K1}). 

It is known that a conical symplectic singularity $X$ admits a homogeneous partial resolution of singularities $\phi: \widetilde X\to X$ called a $\Bbb Q$-{\it factorial terminalization}, such that the smooth locus $\widetilde X^\circ\subset \widetilde X$ has codimension $\ge 4$ and the symplectic form of $X^\circ\subset \widetilde X$ extends to one on $\widetilde X^\circ$ (this can be deduced from the main result of \cite{BCHM}, see \cite{Lo3}, Proposition 2.1). Moreover, the second Poisson cohomology $HP^2(X)$ is naturally isomorphic to $H^2(\widetilde X,\Bbb C)$ (\cite{N3}). Using this, Namikawa showed (\cite{N2}) that there exists a semi-universal filtered Poisson deformation $X_\lambda$ of $X$ parametrized by $\lambda\in H^2(\widetilde X,\Bbb C)$ (more precisely, there is a crystallographic reflection group $W$ acting on $H^2(\widetilde X,\Bbb Z)$ called the Namikawa Weyl group such that $X_\lambda$ is induced by the universal Poisson deformation of $X$ parametrized by the vector space $H^2(\widetilde X,\Bbb C)/W$). Moreover, all $X_\lambda$ have symplectic singularities (in particular, finitely many symplectic leaves). 

Furthermore, Losev showed in \cite{Lo1} that this deformation lifts to the quantum level, giving a family of filtered algebras $A_{t,\lambda}$, $t\in \Bbb C$, 
such that $A_{0,\lambda}=\Bbb C[X_\lambda]$ and ${\rm gr}(A_{t,\lambda})=
\Bbb C[X]$. Namely, this family is obtained as the graded part of the completed algebra 
$\widehat A_{t,\lambda}=H^0(\widetilde X^\circ,\mathcal A_{t,\lambda})$, where 
$\mathcal A_{t,\lambda}$ is a sheaf of deformation quantizations of $\widetilde X^\circ$ defined by Bezrukavnikov and Kaledin (\cite{BK}). The family $A_{t,\lambda}$ is a semi-universal filtered quantization of $X$ (it does not depend on the choice of $\widetilde X$), and $A_{t,\lambda}\cong A_{st,s\lambda}$ for $s\in \Bbb C^\times$. We denote $A_{1,\lambda}$ by $A_\lambda$. 

Moreover, it is strongly expected and in many cases known\footnote{Possibly this follows in general from the known 
results, but I was unable to find a suitable reference.} that the reduction $A_{\lambda,p}$ of $A_\lambda$ to a field $\bold k$ characteristic $p$ (defined for sufficiently large $p$) has a big center $Z_{\lambda,p}$ such that  ${\rm gr}Z_{\lambda,p}=\bold k[X]^p=\bold k[X^{(1)}]$, where $X^{(1)}$ is the Frobenius twist of $X$ (in particular, it is module-finite over the center), and ${\rm Spec}Z_{\lambda,p}$ is isomorphic to $X_{\lambda^p-\lambda}^{(1)}$.\footnote{If $\lbrace c_j\rbrace$ is a basis of $H^2(\widetilde X,\Bbb Z)$ and $\lambda=\sum_i \lambda_ic_i$, $\lambda_i\in \bold k$, then $\lambda^p:=\sum_i \lambda_i^p c_i$.} Namely, this is satisfied for quotient singularities (by the Appendix to \cite{BFG}), for Higgs branches, such as quiver varieties (by virtue of the hamiltonian reduction procedure), for the Springer resolution for a simple Lie algebra $\g$ (by the structure of the $p$-center of $U(\g)$ characteristic $p$), hence for Slodowy slices, for Coulomb branches (by \cite{L}), etc. 
  
Thus to every finitely generated module $M$ over $A_{\lambda}$, we can attach its sequence of $p$-supports ${\rm supp}_p M\subset X_{\lambda^p-\lambda}^{(1)}$ which are defined for sufficiently large $p$ and independent for almost all $p$ on the details of reduction. 

Let $Y$ be a Poisson variety with finitely many symplectic leaves. 
We say that a subvariety $Z\subset Y$ is {\bf Lagrangian} if it is coisotropic and 
the intersection of $Z$ with every symplectic leaf is isotropic.  
Also recall from \cite{Lo2} that a finitely generated $A_{\lambda}$-module $M$ is called {\it holonomic} if ${\rm supp}M$ is Lagrangian, i.e. if the intersection of ${\rm supp}M$ with every symplectic leaf of $X$ is isotropic (note that ${\rm supp}M$ is automatically coisotropic by Gabber's theorem). The following conjecture gives a potential generalization of Bitoun's theorem to conical symplectic singularities. 

\begin{conjecture}\label{coo} If $M$ is holonomic then ${\rm supp}_p M$ is Lagrangian in $X_{\lambda^p-\lambda}^{(1)}$ for sufficiently large $p$. 
\end{conjecture}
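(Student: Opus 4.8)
The plan is to follow the strategy of Section~2: reduce the statement to one about liftable flat connections, to which Corollary~\ref{c3} applies. Since $A_\lambda$ carries no large symmetry group, the global use of Bernstein's Theorem~\ref{c5} must be replaced by a local analysis, carried out leaf by leaf by means of the formal slice theorem for conical symplectic singularities and their quantizations. Coisotropy of $\mathrm{supp}_p M$ follows from the characteristic-$p$ involutivity (Gabber) theorem for the module-finite algebra $A_{\lambda,p}$ over its Poisson center $Z_{\lambda,p}$, so the content of the conjecture is that $\mathrm{supp}_p M$ meets every symplectic leaf of $X_{\lambda^p-\lambda}^{(1)}=\mathrm{Spec}\,Z_{\lambda,p}$ in an isotropic subvariety; this may be checked at the generic point $\eta$ of each irreducible component $C$ of $\mathrm{supp}_p M$, which we realize (for generic reduction, as in the proof of Theorem~\ref{t6}) as the reduction modulo $p$ of the corresponding characteristic-zero picture.

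Near $\eta$ I would reduce to the situation of Section~2 as follows. Let $\mathcal L$ be the symplectic leaf of $X_{\lambda^p-\lambda}^{(1)}$ through $\eta$; if $\dim\mathcal L=0$ there is nothing to prove, so assume $\dim\mathcal L=2m>0$. Under the Namikawa--Losev correspondence between the leaves of $X$ and those of its Poisson deformations, $\mathcal L$ corresponds to a leaf $\mathcal L_0\subset X$ whose transverse slice $S$ is a conical symplectic singularity of dimension $\dim X-2m<\dim X$; and the formal slice theorem for the quantizations (Bezrukavnikov--Kaledin, Losev), applied in characteristic zero and then reduced modulo $p$, gives in the formal neighborhood of $\eta$ isomorphisms
\[
\widehat{A_{\lambda,p}}\;\cong\;\widehat{W_m}\ \widehat\otimes\ \widehat{A_{\bar\lambda,p}(S)},\qquad \widehat{X_{\lambda^p-\lambda}^{(1)}}\;\cong\;\widehat{(T^*\mathbb A^m)^{(1)}}\ \times\ \widehat{S_{\bar\lambda^p-\bar\lambda}^{(1)}},
\]
where $W_m$ is a Weyl algebra of rank $m$ and $\bar\lambda$ is the image of $\lambda$ in $H^2(\widetilde S)$. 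Moreover Losev's restriction functors (a Kashiwara-type equivalence for holonomic modules) give a compatible decomposition $\widehat M\cong\widehat N\ \widehat\otimes\ F$, with $N$ a holonomic $W_m$-module and $F$ a finitely generated $A_{\bar\lambda}(S)$-module; and since $C=\overline{\{\eta\}}\subset\overline{\mathcal L}$, the module $M$ is supported near $\eta$ inside $\overline{\mathcal L}$, which forces $F$ to be supported at the cone point of $S$ and hence finite-dimensional over $\bold k$.

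The argument then concludes as in Section~2. Since $\mathrm{supp}\,F$ is a point, $SS(N)=C$ is near $\eta$ a smooth Lagrangian of $\widehat{T^*\mathbb A^m}$; by the formal Darboux--Weinstein theorem one may choose (quantum-)Darboux coordinates in which $C$ is the zero section, and then $N$ is $\mathcal O$-coherent --- a vector bundle with a flat connection. This is the local substitute for Theorem~\ref{c5}. Being a generic reduction of a characteristic-zero object, $N$ is liftable, and its rank is bounded independently of $p$ (it belongs to the characteristic-zero multiplicity data of $M$), hence is $<p$ for $p\gg 0$; so Corollary~\ref{c3} shows that $\mathrm{supp}_p N$ is Lagrangian in $(T^*\mathbb A^m)^{(1)}$. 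On the other hand $\mathrm{supp}_p F$ is a finite set of closed points of $S_{\bar\lambda^p-\bar\lambda}^{(1)}$, and, being coisotropic, each such point lies on a $0$-dimensional symplectic leaf. Since leaves of a product are products of leaves and $(T^*\mathbb A^m)^{(1)}$ is a single leaf, near $\eta$ the set $\mathrm{supp}_p M=\mathrm{supp}_p N\times\mathrm{supp}_p F$ meets each symplectic leaf of $X_{\lambda^p-\lambda}^{(1)}$ either trivially or in $\mathrm{supp}_p N\times\{\mathrm{pt}\}$, which is isotropic because $\mathrm{supp}_p N$ is Lagrangian; in particular $\mathrm{supp}_p M\cap\mathcal L$ is isotropic, as needed. (When $2m=\dim X$, i.e.\ $C$ lies over the open leaf of $X$, this is the special case $S=\mathrm{pt}$, and the argument is simply Darboux--Weinstein together with Corollary~\ref{c3}; it plays the role of the $\mathcal O$-coherent case in Bitoun's theorem.)

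The main obstacle is that several of the inputs are, at present, only partially established. Most fundamentally, the whole framework presupposes the ``big center'' property, i.e.\ $\mathrm{Spec}\,Z_{\lambda,p}\cong X_{\lambda^p-\lambda}^{(1)}$ as Poisson varieties, which the text itself flags as strongly expected but not known in general. Beyond that, one needs a formal slice theorem for the quantizations $A_\lambda$ compatible with reduction modulo $p$ --- so that the Weyl-algebra factor $\widehat{W_m}$ and the quantum-Darboux normalization genuinely descend and transport $p$-supports correctly (here the Bezrukavnikov--Kaledin Azumaya property is the key input); Losev's restriction-functor/Kashiwara-type equivalences in the precise form yielding $\widehat M\cong\widehat N\ \widehat\otimes\ F$ with $F$ finite-dimensional; and the Namikawa--Losev leaf correspondence together with its characteristic-$p$ analog identifying the slices of $X_{\lambda^p-\lambda}^{(1)}$ with the $S_{\bar\lambda^p-\bar\lambda}^{(1)}$. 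All of these are known in the examples listed above, but assembling them uniformly --- so that the clean characteristic-zero slice picture descends compatibly to the characteristic-$p$ picture underlying the very definition of $\mathrm{supp}_p M$ --- is the crux of the problem.
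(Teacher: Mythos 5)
The statement you are proving is labeled a \emph{conjecture} in the paper, and the paper offers no proof of it: after stating it, the author only records (a) that when $X$ admits a conical symplectic resolution, the conjecture is equivalent (via Lemma 5.1 of the appendix to \cite{Lo2}) to isotropy of the preimage of ${\rm supp}_p M$ in the resolution, and (b) the expectation that for Higgs branches it can be deduced from Bitoun's theorem by Hamiltonian reduction. So there is no argument in the text to compare yours against, and your proposal should be judged as a proof strategy for an open problem. As such it is reasonable and roughly in the spirit of how experts would attack it (leaf-by-leaf slice analysis via Losev's restriction functors, reducing to Corollary \ref{c3} on the Weyl-algebra factor), but it is not a proof, and you say so yourself: the big-center property, the compatibility of the formal slice decomposition with reduction mod $p$, and the characteristic-$p$ leaf correspondence are all unproven in general. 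Note also that your route is genuinely different from the one the paper suggests for the tractable cases: the paper proposes Hamiltonian reduction from $T^*V$ for Higgs branches, not a direct slice argument on $X$ itself.

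Beyond the gaps you flag, two steps deserve sharper scrutiny. First, the slice decomposition $\widehat M\cong\widehat N\,\widehat\otimes\,F$ is a characteristic-zero statement at a point of $X$, whereas $\eta$ is the generic point of a component of the characteristic-$p$ support living in $X^{(1)}_{\lambda^p-\lambda}$; these two completions do not obviously correspond under reduction, and making them do so uniformly in $p$ is itself a nontrivial claim, not a formality. Second, the quantum Darboux--Weinstein normalization putting $C$ in the form of a zero section is a \emph{formal, non-algebraic} symplectomorphism; to then invoke Corollary \ref{c3} you need the resulting connection to be \emph{liftable} over an Artinian ring in which $p\neq 0$, and liftability must be shown to survive both the formal coordinate change and its quantization. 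Since Example \ref{LMex} shows that liftability is a delicate hypothesis (it can fail to force Lagrangianity already at rank $p$), this is not a point one can wave through. I would present your text as a program with identified obstructions rather than as a proof.
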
 

If $X$ admits a conical symplectic resolution $\widetilde X$ then for all $\lambda$, 
$X_\lambda$ also admits a (non-conical) symplectic resolution $\widetilde X_\lambda$ which deforms $\widetilde X$ (\cite{GK,N4}). Thus by Lemma 5.1 in the appendix to \cite{Lo2}, in this case Conjecture \ref{coo} is equivalent to the statement that the preimage of ${\rm supp}_p M$ in $\widetilde X$ is isotropic. 

We expect that if $X$ is a Higgs branch (i.e., obtained by Hamiltonian reduction 
from an action of a connected reductive group $G$ on a symplectic 
representation $V$, with flat moment map) then this conjecture 
can be deduced from Bitoun's theorem by Hamiltonian reduction.  

\section{Lagrangianity of $p$-supports of a holonomic $q$-D-module} 

\subsection{Flat $q$-connections and $q$-D-modules} 
In this section we generalize Bitoun's theorem to $q$-D-modules. To this end, for $q\in \bold k^\times$, define the algebra $\mathcal D_{r,q}(\bold k)$ generated by $x_i^{\pm 1},T_i^{\pm 1}$, $1\le i\le r$, with defining relations 
$$
[x_i,x_j]=[T_i,T_j]=0,\ T_ix_j=q^{\delta_{ij}}x_jT_i.
$$
This algebra acts naturally on $\bold k[x_1^{\pm 1},...,x_r^{\pm 1}]$, with $x_i$ acting by multiplication 
and $T_i$ acting by translation operators
$$
(T_if)(x_1,...,x_r)=f(x_1,...,qx_i,...,x_r).
$$ 
Thus $\mathcal D_{r,q}(\bold k)$ may be viewed as the algebra of $q$-difference operators (at least when $q$ is not a root of $1$). Unlike the case of additive difference operators (Remark \ref{adddif}), this algebra is not isomorphic to the algebra of differential operators on any variety, so the results of the previous sections 
don't apply and we have to start from scratch. 

The basic theory of $\mathcal D_{r,q}(\bold k)$-modules (a.k.a. $q$-{\bf D-modules}) for $q$ not a root of $1$ parallel to the Bernstein-Kashiwara theory of usual D-modules was developed in the first two sections of \cite{S}. Note that in these sections it is assumed that ${\rm char}(\bold k)=0$, 
but this assumption is not used, so we will not make it. 

Let us summarize the basic properties of $\mathcal D_{r,q}(\bold k)$ and its modules, mostly following \cite{S}. If $q$ is not a root of unity, then $\mathcal D_{r,q}(\bold k)$ is simple (\cite{S}, Proposition 1.2.1) and has trivial center. On the other hand, if $q$ is a root of unity of order $N$ then the center of $\mathcal D_{r,q}(\bold k)$ is 
$$
Z=\bold k[X_1^{\pm 1},...,X_r^{\pm 1},P_1^{\pm 1},...,P_r^{\pm 1}],
$$ where $X_i:=x_i^N,P_i:=T_i^N$, and it carries a natural symplectic structure induced by varying $q$, with Poisson bracket given by 
$$
\lbrace X_i,X_j\rbrace=\lbrace P_i,P_j\rbrace=0,\ \lbrace P_i,X_j\rbrace=\delta_{ij}P_iX_j.
$$ 
In this case, if $M$ is a finitely generated $\mathcal D_{r,q}(\bold k)$-module then we can define its {\bf support} ${\rm supp}(M)$ as a $Z$-module, which is a closed subvariety of the $2r$-dimensional symplectic torus $\Bbb  G_m^{2r}={\rm Spec}(Z)$.

The algebra $\mathcal D_{r,q}(\bold k)$ has Gelfand-Kirillov dimension $2r$, and if $q$ is not a root of unity, it satisfies {\bf Bernstein's inequality}: every nonzero finitely generated module has Gelfand-Kirillov dimension $\ge r$ (\cite{S}, Corollary 2.1.2). In this case, finitely generated modules of Gelfand-Kirillov dimension $\le r$ (i.e., 
of dimension exactly $r$ and the zero module) are called {\bf holonomic} (\cite{S}, Section 2). 

Examples of holonomic modules are obtained from $q$-connections with rational coefficients. Namely, a (rational) $q$-{\bf connection} $\nabla$ on $(\bold k^\times)^r$
is a collection of elements  
$$
\nabla_i=T_i^{-1}a_i,\ a_i\in GL_n(\bold k(x_1,...,x_r)).
$$
A $q$-connection $\nabla$ is said to be {\bf flat} if 
$[\nabla_i,\nabla_j]=0$, i.e., 
$$
a_i(x_1,...,qx_j,...,x_r)a_j(x_1,...,x_r)=a_j(x_1,...,qx_i,...,x_r)a_i(x_1,...,x_r).
$$
It is clear that a flat $q$-connection is the same thing as 
a $\mathcal D_{r,q}(\bold k)$-module structure on $\bold k(x_1,...,x_r)^n$ extending the obvious $\bold k[x_1^{\pm 1},...,x_r^{\pm 1}]$-module structure. It is shown in \cite{S}, Corollary 2.5.2 that if $M$ is a holonomic $\mathcal D_{r,q}(\bold k)$-module then 
the vector space 
$$
M_{\rm loc}:=\bold k(x_1,...,x_r)\otimes_{\bold k[x_1^{\pm 1},...,x_r^{\pm 1}]}M
$$ 
over $\bold k(x_1,...,x_r)$ is finite dimensional, so $M_{\rm loc}$ 
gives rise to a flat $q$-connection $\nabla_M$ defined uniquely up to conjugation by $GL_n(\bold k(x_1,...,x_r))$. (It may, of course, happen that $M\ne 0$ but $M_{\rm loc}=0$). Conversely, for any flat $q$-connection $\nabla$ there exists 
a holonomic $\mathcal D_{r,q}(\bold k)$-module $M$ such that $\nabla_M=\nabla$ (\cite{S}, Theorem 2.7.1). 

\subsection{Freeness of $\mathcal O$-coherent $q$-D-modules}

Let $f\in \bold k[x_1^{\pm 1},...,x_r^{\pm 1}]$. 
For every vector $\bold m:=(m_1,...,m_r)\in \Bbb Z^r$, let $f_{\bold m}:=T_{1}^{m_1}...T_r^{m_{r}}f$. 

\begin{lemma}\label{l10} If $f\ne 0$ then there exist $h_\bold m\in  \bold k[x_1^{\pm 1},...,x_r^{\pm 1}]$, almost all zero, 
such that $\sum_{\bold m}h_{\bold m}f_{\bold m}=1$. 
\end{lemma}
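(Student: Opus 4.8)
The plan is to show that the Laurent polynomials $f_{\bold m}$, as $\bold m$ ranges over $\Z^r$, generate the unit ideal in the ring $A := \bold k[x_1^{\pm 1},\dots,x_r^{\pm 1}]$. Since $A$ is a finitely generated commutative $\bold k$-algebra that is a domain, it suffices to check that the ideal $I := (f_{\bold m} : \bold m\in\Z^r)$ is not contained in any maximal ideal of $A$, i.e., that the $f_{\bold m}$ have no common zero on the torus $(\bar{\bold k}^\times)^r$ (after passing to an algebraic closure, which is harmless for checking that $I = A$). So the real content is: \emph{if $f\neq 0$ then the translates $T_1^{m_1}\cdots T_r^{m_r}f$ have empty common vanishing locus on the torus.}

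First I would reduce to the case where $\bold k$ is algebraically closed, and suppose for contradiction that some point $a = (a_1,\dots,a_r)\in(\bold k^\times)^r$ is a common zero of all the $f_{\bold m}$. Unwinding the definition of $T_i$, the value of $f_{\bold m}$ at $a$ is $f(q^{m_1}a_1,\dots,q^{m_r}a_r)$. So the assumption says $f$ vanishes at every point of the set $S := \{(q^{m_1}a_1,\dots,q^{m_r}a_r) : \bold m\in\Z^r\}$. The key step is then to argue that the Zariski closure $\overline{S}$ is too big to lie in the proper closed subset $\{f=0\}$ — concretely, that $\overline{S}$ has dimension $r$, i.e., is all of the torus. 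If $q$ is not a root of unity this is immediate: already the one-variable orbit $\{q^m a_i\}$ is Zariski-dense in $\bold k^\times$ (an infinite subset of a curve), and taking products the full set $S$ is dense in $(\bold k^\times)^r$, contradicting $f\neq 0$. If $q$ is a root of unity of order $N$, then $S$ is finite, so this crude argument fails and one instead has to use the freedom of translation more carefully — this is the step I expect to be the main obstacle.

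To handle the root-of-unity case, the cleaner approach is to avoid points altogether and work directly in $A$. Write $f = \sum_{\bold n} c_{\bold n} x^{\bold n}$ with $c_{\bold n}\in\bold k$, $x^{\bold n}:=x_1^{n_1}\cdots x_r^{n_r}$, finitely many nonzero. Then $f_{\bold m} = \sum_{\bold n} c_{\bold n} q^{\bold m\cdot\bold n} x^{\bold n}$, where $\bold m\cdot\bold n = \sum_i m_i n_i$. Fix one index $\bold n_0$ with $c_{\bold n_0}\neq 0$; I claim a suitable $\bold k$-linear combination of finitely many $f_{\bold m}$ is a nonzero scalar multiple of the single monomial $x^{\bold n_0}$, which is a unit in $A$, and then rescaling gives $1$. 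Indeed, multiplying $f_{\bold m}$ by $x^{-\bold n_0}$ (a unit), we get $\sum_{\bold n} c_{\bold n} q^{\bold m\cdot\bold n} x^{\bold n - \bold n_0}$; the coefficient of each monomial $x^{\bold n-\bold n_0}$ as a function of $\bold m$ is $c_{\bold n} q^{\bold m\cdot\bold n}$, a function of $\bold m\in\Z^r$ of the "character" type $\bold m\mapsto q^{\bold m\cdot\bold n}$. These characters, for the finitely many distinct $\bold n$ occurring, are linearly independent over $\bold k$ unless two of them coincide identically on $\Z^r$; and $q^{\bold m\cdot\bold n} = q^{\bold m\cdot\bold n'}$ for all $\bold m$ exactly when $N \mid (n_i - n_i')$ for all $i$. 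The remaining subtlety is precisely such coincidences. To kill them, I would exploit that we are free to pick \emph{which} $f_{\bold m}$ to combine: replace the exponent set by intersecting with a translate, or more simply, first note that we may pre-multiply $f$ by any monomial without changing the problem, so we can assume $\bold n_0 = 0$; then group the monomials $x^{\bold n}$ of $f$ according to the class of $\bold n$ in $(\Z/N)^r$. Within the class of $\bold n_0=0$ (monomials $x^{\bold n}$ with all $n_i\equiv 0$), the substitution $x_i \mapsto x_i$ already makes them have character $q^{\bold m\cdot\bold n} = 1$, so this whole "block" contributes a constant (namely $\sum_{N\mid\bold n} c_{\bold n} x^{\bold n}$) independent of $\bold m$; using averaging over a period box $\bold m\in\{0,\dots,N-1\}^r$ and the orthogonality of the characters of $(\Z/N)^r$, I can project onto this block, and I am left with showing $\sum_{N\mid\bold n} c_{\bold n}x^{\bold n}$ (or, more robustly, one of the blocks) is a unit — but each such block is itself a Laurent polynomial in $x_1^{N},\dots,x_r^{N}$ times a monomial, and is nonzero for at least one choice of class since $f\neq 0$; inverting that nonzero block's lowest-degree monomial after one more reduction completes the argument. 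In short, the structure is: pass to $\bar{\bold k}$; reduce to showing no common zero; when $q$ has infinite order use Zariski density of a single orbit; when $q$ has finite order $N$, use character orthogonality over $(\Z/N)^r$ to isolate a monomial, which is invertible in $A$. The delicate bookkeeping with the root-of-unity coincidences is the part that needs care.
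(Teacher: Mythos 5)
Your main line of argument --- reduce via the Nullstellensatz to showing that the translates $f_{\bold m}$ have no common zero on the torus over $\overline{\bold k}$, and get this from Zariski density of a single orbit $\{(q^{m_1}a_1,\dots,q^{m_r}a_r):\bold m\in\Bbb Z^r\}$ --- is exactly the paper's proof, and it is complete for the case the lemma actually concerns: throughout this subsection $q$ is assumed not to be a root of unity (this is the standing hypothesis of the $q$-D-module theory being invoked, and the paper's proof begins ``Since $q$ is not a root of unity\dots''). Your long digression on the root-of-unity case is therefore out of scope, and moreover cannot be repaired, because the statement is false there: for ${\rm ord}(q)=N$ and $f=x_1^N+1$ every translate $f_{\bold m}$ equals $f$, which generates a proper ideal. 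This is precisely where your character-orthogonality sketch breaks down --- the ``block'' you isolate is a general Laurent polynomial in $x_1^N,\dots,x_r^N$, not a monomial, and need not be a unit.
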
 

\begin{proof} Since $q$ is not a root of unity, every orbit of $q$-translations of coordinates is Zariski dense in $(\bold k^\times)^{r}$. Thus the system of equations $f_{\bold m}=0$ for all $\bold m\in \Bbb Z^r$ has no solutions. Hence the result follows from the Nulstellensatz. 
\end{proof} 

\begin{proposition}\label{freeness} 
Let $M$ be a holonomic $\mathcal D_{r,q}(\bold k)$-module 
finite over $\bold k[x_1^{\pm 1},...,x_r^{\pm 1}]$. 
Then $M$ is a free $\bold k[x_1^{\pm 1},...,x_r^{\pm 1}]$-module. 
\end{proposition}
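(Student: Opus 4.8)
The plan is to exploit the interaction between the $A:=\bold k[x_1^{\pm 1},\dots,x_r^{\pm 1}]$-module structure on $M$ and the semilinear action of the $T_i$, using Lemma \ref{l10} to upgrade generic freeness to local freeness at every prime, and then to invoke the Laurent-polynomial form of the Quillen--Suslin theorem to pass from ``projective'' to ``free''.

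First I would record the relevant semilinearity. Since $T_ix_j=q^{\delta_{ij}}x_jT_i$ in $\mathcal D_{r,q}(\bold k)$, each $T_i$ is an additive bijection of $M$ satisfying $T_i(fm)=\sigma_i(f)T_i(m)$, where $\sigma_i$ is the $\bold k$-algebra automorphism of $A$ fixing $x_j$ for $j\neq i$ and sending $x_i\mapsto qx_i$; thus $T_i$ is an $A$-linear isomorphism between $M$ and the module obtained from $M$ by transporting the $A$-action through $\sigma_i$. Applying $\sigma_i$ entrywise to a finite presentation matrix of $M$ over the Noetherian ring $A$ therefore yields a new presentation matrix of $M$, whence every Fitting ideal $\mathcal F_k(M)\subseteq A$ is stable under all the $\sigma_i$, hence under all the automorphisms $\sigma_{\bold m}=\sigma_1^{m_1}\cdots\sigma_r^{m_r}$ with $\bold m\in\Bbb Z^r$, since Fitting ideals are generated by the minors of a presentation matrix.

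Next I would set $K:=\bold k(x_1,\dots,x_r)$ and $n:=\dim_K M_{\rm loc}$, so that $K\otimes_A M\cong K^n$. Because Fitting ideals commute with the flat base change $A\to K$, we get $\mathcal F_{n-1}(M)\otimes_A K=\mathcal F_{n-1}(K^n)=0$ and $\mathcal F_n(M)\otimes_A K=\mathcal F_n(K^n)=K$; as $A$ is a domain this forces $\mathcal F_{n-1}(M)=0$ and $\mathcal F_n(M)\neq 0$. Now pick $0\neq f\in\mathcal F_n(M)$. By the previous paragraph $f_{\bold m}=\sigma_{\bold m}(f)\in\mathcal F_n(M)$ for every $\bold m\in\Bbb Z^r$, so Lemma \ref{l10} gives $1=\sum_{\bold m}h_{\bold m}f_{\bold m}\in\mathcal F_n(M)$, i.e. $\mathcal F_n(M)=A$. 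Hence at every prime $\mathfrak p\subseteq A$ the module $M_{\mathfrak p}$ is minimally generated by exactly $n$ elements; choosing such a generating set gives a short exact sequence $0\to Q\to A_{\mathfrak p}^n\to M_{\mathfrak p}\to 0$, and tensoring with $K$ shows that $Q$ has rank $0$, hence is torsion, hence vanishes since $A_{\mathfrak p}^n$ is torsion-free. Therefore $M_{\mathfrak p}\cong A_{\mathfrak p}^n$ for all $\mathfrak p$, i.e. $M$ is a finitely generated projective $A$-module, and by Swan's theorem (the Laurent-polynomial analogue of the Quillen--Suslin theorem over a field; see \cite{La}) every finitely generated projective module over $A$ is free. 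This proves the proposition; note that only the hypothesis ``$M$ is finite over $A$'', not holonomicity itself, is used.

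The one nontrivial ingredient is Lemma \ref{l10} (equivalently, the Zariski density of orbits of $q$-translations), which is exactly what promotes the nonzero $\sigma_i$-invariant ideal $\mathcal F_n(M)$ to the unit ideal; everything else is routine commutative algebra, and I expect the only point requiring genuine care to be the bookkeeping that makes the Fitting ideals $\sigma_i$-invariant. If one prefers to avoid Fitting ideals, essentially the same argument runs as follows: the torsion submodule of $M$ is $T_i$-stable and its annihilator is a nonzero $\sigma_i$-stable ideal, hence equals $A$ by Lemma \ref{l10}, so $M$ is torsion-free; then the non-locally-free locus of the torsion-free module $M$ is a proper closed $\sigma_i$-stable subset of ${\rm Spec}\,A$, hence empty because every orbit of $q$-translations is dense; one concludes again with Swan's theorem.
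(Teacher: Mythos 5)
Your proposal is correct and follows essentially the same route as the paper: both use Lemma \ref{l10} (density of $q$-orbits) to upgrade generic freeness over $\bold k[x_1^{\pm 1},...,x_r^{\pm 1}]$ to local freeness everywhere, and both conclude with Swan's theorem. The only difference is that you package the $\sigma_{\bold m}$-invariance via Fitting ideals, whereas the paper simply observes that if $M[1/f]$ is free then so is each $M[1/f_{\bold m}]$; your closing sketch is in effect the paper's own argument.
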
 

\begin{proof}
Let $0\ne f\in \bold k[x_1^{\pm 1},...,x_r^{\pm 1}]$
be such that $M[1/f]$ is a free $\bold k[x_1^{\pm 1},...,x_r^{\pm 1}][1/f]$-module. Then for all $\bold m$, 
$M[1/f_{\bold m}]$ is a free 
$\bold k[x_1^{\pm 1},...,x_r^{\pm 1}][1/f_{\bold m}]$-module. 
By Lemma \ref{l10}, this means that $M$ is locally free over $\bold k[x_1^{\pm 1},...,x_r^{\pm 1}]$. But every finite locally free module over a Laurent polynomial algebra over a field is free by Swan's theorem (\cite{La}, Corollary 4.10).
\end{proof} 

\subsection{$q$-analog of Bernstein's theorem} 

Note that similarly to the differential case, the group ${\rm Sp}(2r,\Bbb Z)$ acts by automorphisms on $\mathcal D_{r,q}(\bold k)$ (\cite{S}, Subsection 1.3). Part (iii) of the following theorem is a $q$-analog of Theorem \ref{c5}. 

\begin{theorem}\label{t2} Let $M$ be a 
holonomic $\mathcal D_{r,q}(\bold k)$-module which is finite over the subalgebra $\mathcal D_{s,q}(\bold k[x_{s+1}^{\pm 1},...,x_r^{\pm 1}])$. Then 

(i) there exists $g\in {\rm Sp}(2s,\Bbb Z)$ such that $gM$ is finite over $\bold k[x_1^{\pm 1},...,x_r^{\pm 1}]$; 

(ii) $M$ is a free $\bold k[x_{s+1}^{\pm 1},...,x_r^{\pm 1}]$-module. 

(iii) For every holonomic $\mathcal D_{r,q}(\bold k)$-module $M$ there is $g\in {\rm Sp}(2r,\Bbb Z)$ such that $gM$ is finite over $\bold k[x_1^{\pm 1},...,x_r^{\pm 1}]$.
\end{theorem}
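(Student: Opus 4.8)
The plan is to prove the three parts of Theorem~\ref{t2} together, with parts (i) and (ii) being the technical heart and part (iii) following as the special case $s=r$ combined with a preliminary reduction. First I would clarify the setup: writing $\mathcal D_{r,q}(\bold k)$ as the $q$-Weyl algebra of the symplectic lattice $\Bbb Z^{2r}$ with its standard symplectic pairing, the subgroup ${\rm Sp}(2s,\Bbb Z)$ acts through the automorphisms that mix only the first $s$ pairs $(x_i,T_i)$, $1\le i\le s$, and commute with the ``parameter'' variables $x_{s+1}^{\pm1},\dots,x_r^{\pm1}$; thus twisting by such a $g$ is compatible with the hypothesis that $M$ is finite over $\mathcal D_{s,q}(\bold k[x_{s+1}^{\pm1},\dots,x_r^{\pm1}])$.

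For part (i), the idea is to mimic the proof of Theorem~\ref{c5}, but with the singular support replaced by an appropriate ``Newton polytope / toric degeneration'' support for $q$-D-modules. Concretely, the algebra $\mathcal D_{s,q}$ carries the filtration by total degree in $x_i^{\pm1},T_i^{\pm1}$ whose associated graded is a Laurent-polynomial algebra (a commutative torus $\Bbb G_m^{2s}$, or rather a quotient thereof after homogenization), and to a good filtration on $M$ one attaches a closed conical subvariety $Y\subset \Bbb A^{2s}$ (the affine cone over $\Bbb P^{2s-1}$-coordinates), the ``$q$-characteristic variety.'' Holonomicity plus finiteness over $\mathcal D_{s,q}(\dots)$ forces $\dim Y\le s$ (this is where Bernstein's inequality from \cite{S}, Corollary 2.1.2, enters). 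Now I would invoke Lemma~\ref{l4}(ii): there is $g\in {\rm Sp}(2s,\Bbb Z)$ — or at least $g\in {\rm Sp}(2s,\bold k)$, and one then argues that an integral point can be chosen since ${\rm Sp}(2s,\Bbb Z)$ is Zariski-dense, or directly that a suitable integral $g$ works because the relevant ``bad'' locus of $g$'s is a proper closed subvariety — such that the projection $gY\to \Bbb A^s$ onto the $x$-coordinates is finite. Translating this back through the Rees/associated-graded construction, $gM$ becomes finite over $\bold k[x_1^{\pm1},\dots,x_s^{\pm1}]$, and combined with the original finiteness over the parameter variables, finite over $\bold k[x_1^{\pm1},\dots,x_r^{\pm1}]$.

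For part (ii), I would first apply part (i) to get $g$ with $gM$ finite over the full Laurent algebra $\bold k[x_1^{\pm1},\dots,x_r^{\pm1}]$, then apply Proposition~\ref{freeness} to conclude $gM$ is \emph{free} over $\bold k[x_1^{\pm1},\dots,x_r^{\pm1}]$, hence in particular free — being a direct summand, or by a rank/flatness argument — over the sub-Laurent-algebra $\bold k[x_{s+1}^{\pm1},\dots,x_r^{\pm1}]$; but since the automorphism $g$ fixes $x_{s+1}^{\pm1},\dots,x_r^{\pm1}$, freeness of $gM$ over this subalgebra is equivalent to freeness of $M$ over it. (Here one should double-check that ``finite + locally free'' descends correctly; the clean route is that $gM$ free over the big algebra $\Rightarrow$ it is a finite flat, hence finite projective, hence — again by Swan's theorem, \cite{La}, Corollary~4.10 — free module over the smaller Laurent-polynomial algebra.) Part (iii) is then the case $s=0$: by \cite{S}, Corollary 2.5.2 and the surrounding structure theory, any holonomic $M$ is, after possibly a first ${\rm Sp}(2r,\Bbb Z)$-twist bringing it into the situation where it is finite over some $\mathcal D_{s,q}(\bold k[x_{s+1}^{\pm1},\dots,x_r^{\pm1}])$ — indeed one can take $s=r$ and the whole algebra, or use a noetherian-induction/generic-flatness argument to find such a presentation — and then part (i) finishes it.

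The main obstacle I expect is the $q$-analog of the singular-support machinery needed for part (i): unlike the Weyl algebra there is \emph{no} canonical filtration on $\mathcal D_{r,q}(\bold k)$ with commutative associated graded (this is precisely the point flagged in the paper's own footnote), so one must be careful about which filtration to use and what conical variety $Y\subset \Bbb A^{2s}$ it produces, verify that finiteness over $\mathcal D_{s,q}(\bold k[x_{s+1}^{\pm1},\dots,x_r^{\pm1}])$ really does cut the dimension of $Y$ down to $\le s$, and — the genuinely $q$-specific subtlety — ensure the symplectic group element delivered by Lemma~\ref{l4}(ii) can be taken in ${\rm Sp}(2s,\Bbb Z)$ rather than merely ${\rm Sp}(2s,\bold k)$, since only integral symplectic matrices act on $\mathcal D_{r,q}(\bold k)$. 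The density of ${\rm Sp}(2s,\Bbb Z)$ in ${\rm Sp}(2s,\bold k)$ (for infinite $\bold k$) together with the openness of the ``finite projection'' condition should handle this, but it needs to be stated carefully, especially if one wants the result uniformly in $q$.
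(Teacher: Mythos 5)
Your treatment of parts (ii) and (iii) matches the paper's: (iii) is indeed the special case $s=r$ of (i) (every module is trivially finite over all of $\mathcal D_{r,q}(\bold k)$), and (ii) follows from (i) together with Proposition \ref{freeness} plus the observation that $g$ fixes $x_{s+1},\dots,x_r$. The problem is part (i), the heart of the theorem, where your plan rests on a tool that does not exist. You propose to filter $\mathcal D_{s,q}$ ``by total degree in $x_i^{\pm1},T_i^{\pm1}$'' so that the associated graded is a commutative Laurent-polynomial algebra, and then to run the characteristic-variety argument of Theorem \ref{c5} through Lemma \ref{l4}(ii). But for any filtration in which monomials are graded additively, the identity $T_ix_i-x_iT_i=(q-1)x_iT_i$ shows that the commutator has the \emph{same} degree as the product itself, so the associated graded algebra is again a quantum torus with the same $q$ --- never commutative. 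This is exactly the obstruction the paper flags in its introductory footnote (no singular support for $q$-D-modules at fixed $q$); you correctly name it as ``the main obstacle'' but do not resolve it, and it cannot be resolved along these lines. A second, related defect: Lemma \ref{l4}(ii) produces $g\in{\rm Sp}(2s,\bold k)$ acting linearly on the span of the generators, whereas ${\rm Sp}(2s,\Bbb Z)$ acts on $\mathcal D_{r,q}$ by \emph{monomial} substitutions, i.e., linearly on the exponent lattice $\Bbb Z^{2s}$; these are not two versions of one action, so Zariski density of integral points does not transport a generic linear $g$ into an admissible integral one.

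The paper's actual argument for (i) is a different, elementary induction on $s$. One first shows (Lemma \ref{l1c}) that $M$ is torsion-free over $R=\bold k[x_{s+1}^{\pm1},\dots,x_r^{\pm1}]$. Since a holonomic module has GK dimension $r<r+s$, $M$ contains no free $\mathcal D_{s,q}(R)$-submodule, so each cyclic generator $v$ is killed by a nonzero operator $D$ with at least two monomials. Choosing $g\in{\rm Sp}(2s,\Bbb Z)$ so that the exponent vectors of the monomials of $g(D)$ have pairwise distinct last ($T_s$-)coordinates turns $g(D)w=0$ into a recursion expressing every $T_s^jw$ in terms of $w,\dots,T_s^{n-1}w$ after inverting $f=a_0a_n\in R$; this yields finiteness of $gM[1/f]$ over $\mathcal D_{s-1,q}(R[1/f][x_s^{\pm1}])$ (Lemma \ref{l1b}). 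The localization is then removed by the gluing argument of Corollary \ref{c1d}, which uses density of $q$-orbits and the Nullstellensatz (Lemma \ref{l10}) --- a step entirely absent from your outline. Iterating reduces $s$ to $0$. To repair your write-up you would need to replace the characteristic-variety step by an argument of this kind, or supply a genuine substitute for the missing commutative degeneration.
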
 

The rest of this subsection is dedicated to the proof of Theorem \ref{t2}.

Let $R$ be a commutative domain and consider the algebra $\mathcal D_{s,q}(R)$ with generators 
$x_i^{\pm 1},T_i^{\pm 1}$, $1\le i\le s$. 

\begin{lemma}\label{l1b} Let $M$ be a finite module over $\mathcal D_{s,q}(R)$ which is torsion-free as an $R$-module. If $M$ does not contain a nonzero free submodule 
then there exists a nonempty Zariski open subset 
$U\subset {\rm Sp}(2s,\Bbb C)$ such that for any  
$g\in {\rm Sp}(2s,\Bbb Z)\cap U$ there exists nonzero $f\in R$ for which the module $gM[1/f]$   
is finite over $\mathcal D_{s-1,q}(R[1/f][x_s^{\pm 1}])\subset \mathcal 
D_{s,q}(R[1/f])$. 
\end{lemma}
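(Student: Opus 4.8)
\textbf{Proof proposal for Lemma \ref{l1b}.}

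The plan is to mimic the classical D-module argument: a finite $\mathcal D_{s,q}(R)$-module that contains no nonzero free submodule should have ``singular support'' (in an appropriate graded sense) that is a proper conical subvariety of the symplectic ``phase space'' $\Bbb G_m^{2s}$ over $R$, and then Bernstein's lemma on cones (Lemma \ref{l4}) produces a generic $g\in {\rm Sp}(2s,\Bbb Z)$ making the projection to the ``position'' torus finite after localizing. First I would attach to $M$ a good filtration with respect to a suitable exhaustion of $\mathcal D_{s,q}(R)$ by finitely generated $R[x_1^{\pm 1},\dots,x_s^{\pm 1}]$-submodules (say $F^d$ = span of monomials $x^aT^b$ with $\sum|b_i|\le d$), so that ${\rm gr}_F M$ is a finite module over the ``commutative shadow'' $R[x_i^{\pm 1},\xi_i^{\pm 1}]$ with $\xi_i$ the symbol of $T_i$; its support $Y_M\subset \Bbb G_m^{s}\times \Bbb A^s$ (or the conification thereof in the $\xi$-variables) is a closed $R$-subscheme. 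The hypothesis that $M$ has no nonzero free submodule should force the fiber of $Y_M$ over the generic point of ${\rm Spec}\,R$ to be a proper cone in the $\xi$-directions: if the full ``cotangent torus'' occurred as a component of the symbol support, one extracts an honest free $\bold k[x^{\pm 1}]$-submodule by a standard argument (a section of ${\rm gr}_F M$ generating such a component lifts to a cyclic free submodule since its annihilator is trivial), contradicting the assumption.

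Next I would apply Lemma \ref{l4}(ii) over the algebraically closed field $\bold k(R)$ (or its algebraic closure): the affine cone $\overline{Y_M}$ of dimension $\le s$ inside the $2s$-dimensional symplectic space admits $g\in {\rm Sp}(2s,\bold k)$ making the projection $g\overline{Y_M}\to$ (position coordinates) finite; by the openness of the finiteness locus this holds for all $g$ in a nonempty Zariski open $U\subset {\rm Sp}(2s,\Bbb C)$, hence for some $g\in {\rm Sp}(2s,\Bbb Z)\cap U$ (this intersection is nonempty because ${\rm Sp}(2s,\Bbb Z)$ is Zariski dense in ${\rm Sp}(2s,\Bbb C)$). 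For such $g$, the finiteness of ${\rm gr}_F(gM)$ over $R[x_1^{\pm 1},\dots,x_s^{\pm 1}]$ holds after inverting a single nonzero $f\in R$ (clearing denominators in the finitely many module generators and in the integral dependence relations); I should be a bit careful that I only need finiteness over $\mathcal D_{s-1,q}(R[1/f][x_s^{\pm 1}])$ rather than over the Laurent algebra itself, so after the symplectic twist it suffices to arrange that the symbol support projects finitely onto the hyperplane complementary to $\xi_s$, i.e.\ that $T_s$ (or rather its twist) acts integrally --- this is exactly the ``one fewer difference variable'' statement, and is what Lemma \ref{l4}(ii) gives when we pick the Lagrangian $L$ to contain the $\xi_s$-axis appropriately.

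The main obstacle I anticipate is the passage from the symplectic geometry of cones to an actual statement about the noncommutative module, i.e.\ making precise the ``good filtration'' and the claim that no free submodule forces a proper symbol cone. Two points need care: (a) the filtration $F^d$ above does not have commutative associated graded in the $x$–$\xi$ cross terms unless one is careful, but since $T_ix_j=q^{\delta_{ij}}x_jT_i$ and $q$ is a scalar, ${\rm gr}_F$ \emph{is} commutative (the $q$ gets absorbed), so this is fine; (b) extracting the free submodule: if a component of $Y_M$ dominates the full $\xi$-torus, one picks a nonzero $m\in M$ whose symbol generates that component, and then $\mathcal D_{s,q}(R)\cdot m$ is finite over $\bold k(R)[x^{\pm 1}]$ only if... actually one wants the opposite — one wants that $m$ generates a submodule on which $R[x^{\pm 1}]$-coefficients of $T^b m$ are algebraically independent, giving freeness of $\bigoplus_b R[x^{\pm 1}] T^b m$; I would phrase this as: the symbol of $m$ being a non-zerodivisor in the relevant component forces the map $R[x^{\pm1}][\xi^{\pm1}]\to {\rm gr}_F M$ to be injective, hence $\mathcal D_{s,q}(R)/{\rm Ann}(m)$ contains a free module. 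Once this dictionary is set up cleanly, the rest is the cone-avoidance argument of Lemma \ref{l4} together with Zariski density of ${\rm Sp}(2s,\Bbb Z)$ and the standard ``invert one $f$'' spreading-out, and part (ii) of the theorem (freeness over $\bold k[x_{s+1}^{\pm1},\dots]$) will come out of Proposition \ref{freeness} applied to the $\mathcal D_{s,q}$-part.
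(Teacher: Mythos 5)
There is a genuine gap, and it sits exactly at the step you yourself flag as point (a). You claim that for the filtration by $T$-degree the associated graded of $\mathcal D_{s,q}$ is commutative because ``the $q$ gets absorbed.'' It is not: the relation $T_ix_i=qx_iT_i$ is homogeneous for that filtration (both sides have $T$-degree $1$), so it survives verbatim in ${\rm gr}_F$, which is again a quantum torus with $\xi_ix_i=qx_i\xi_i$. The commutator $[T_i,x_i]=(q-1)x_iT_i$ does not drop in degree. The paper even states this obstruction explicitly in a footnote: for $q$-difference operators with fixed $q$ there is \emph{no} filtration with commutative associated graded, which is precisely why no singular-support theory is available here. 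Since your entire strategy --- support of ${\rm gr}_FM$ as a conical subvariety, ``no free submodule implies proper symbol cone,'' and then Lemma \ref{l4}(ii) --- runs through this nonexistent commutative shadow, the argument does not get off the ground. A secondary issue is that even granting a support variety, Lemma \ref{l4} concerns linear cones in a symplectic vector space acted on by ${\rm Sp}(2s,\bold k)$, whereas here ${\rm Sp}(2s,\Bbb Z)$ acts on the exponent lattice $\Bbb Z^{2s}$ of Laurent monomials; you would need a separate toric/lattice version of the cone-avoidance statement, which you do not supply.

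The paper's proof is much more elementary and bypasses symbols entirely. One reduces to the case of a cyclic module $M=\mathcal D_{s,q}(R)v$ (taking $U=\cap_iU_i$ and $f=\prod_if_i$ over the cyclic pieces). Non-freeness gives a relation $Dv=0$ with $D\ne 0$, and $R$-torsion-freeness forces $D$ to contain at least two monomials. Viewing the monomials of $D$ as lattice points in $\Bbb Z^{2s}$, one takes $U$ to be the set of $g$ for which these lattice points acquire pairwise distinct last coordinates; then $g(D)w=0$ becomes a relation $(a_0\mu_0+\dots+a_n\mu_nT_s^n)w=0$ with $a_0,a_n\ne 0$ in $R$, which after inverting $f=a_0a_n$ expresses every $T_s^jw$ in terms of $w,\dots,T_s^{n-1}w$ over $\mathcal D_{s-1,q}(R[1/f][x_s^{\pm 1}])$. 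If you want to keep a geometric picture, the right one is the Newton polytope of the single relation $D$, not a characteristic variety.
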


\begin{proof} Let $v_1,...,v_k$ be generators of $M$ and $M_1,...,M_k$ be the cyclic submodules of $M$ generated by $v_1,...,v_k$. It suffices to prove the statement for each $M_i$, and every $M_i$ satisfies the assumptions of the lemma. Thus we may assume without loss of generality that $M\ne 0$ is cyclic on some generator $v$: indeed, if $U_i$ are the open sets corresponding to $M_i$ then we can take $U=\cap_i U_i$, and if $g\in {\rm Sp}(2s,\Bbb Z)\cap U$ and $f_i\in R$ correspond to $M_i,g$ then we may take $f=\prod_i f_i$ for $M$. 

Pick nonzero $D\in D_{s,q}(R)$ such that $Dv=0$ (it exists since $M$ is not free). Since $M$ is torsion-free over $R$, 
$D$ contains at least two monomials. Let 
$U$ be the open set of $g\in {\rm Sp}(2s,\Bbb C)$ 
such that $g\mu$ for all monomials $\mu$ occurring in $D$ 
have different last coordinates (where we view Laurent monomials in the generators $x_i,T_i$ as elements of $\Bbb Z^{2s}$). The module $gM$ 
is then generated by an element $w$ satisfying the equation $g(D)w=0$, which can be written as
$$
(a_0\mu_0+...+a_{n-1}\mu_{n-1}T_s^{n-1}+a_n\mu_nT_s^n)w=0,
$$
where $n\ge 1$, $a_i\in R$, $a_0,a_n\ne 0$, and 
$\mu_i$ are monomials in $x_i^{\pm 1}$, $1\le i\le s$ and $T_i^{\pm 1}$, $1\le i\le s-1$. This equation multiplied by powers of $T_s$ can be used to express $T_s^jw$, $j\in \Bbb Z$ in terms of $w,T_sw,...,T_s^{n-1}w$ with coefficients whose denominators 
are powers of $a_n$ if $j\ge n$ and powers of $a_0$ if $j<0$. 
Thus, setting $f:=a_0a_n$, we see that for every $u\in M$ there exists $N$ and $D_0,...,D_{n-1}\in \mathcal D_{r-1,q}(R[x_r^{\pm 1}])$ with 
$$
f^Nu=\sum_{i=0}^{n-1}D_iT_s^iw.
$$
This implies the statement. 
\end{proof} 

\begin{lemma}\label{l1c} Let $M$ be a $\mathcal D_{r,q}(\bold k)$-module finite over 
$\mathcal D_{s,q}(\bold k[x_{s+1}^{\pm 1},...,x_r^{\pm 1}])\subset \mathcal D_{r,q}(\bold k)$. Then $M$ is torsion-free over $\bold k[x_{s+1}^{\pm 1},...,x_r^{\pm 1}]$.
\end{lemma}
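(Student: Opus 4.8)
\emph{Proof proposal.} The plan is to show that the $A$-torsion submodule of $M$ vanishes, where $A:=\bold k[x_{s+1}^{\pm 1},\dots,x_r^{\pm 1}]$, using the standing hypothesis of this section that $q$ is not a root of unity. The first observation is that $A$ lies in the \emph{center} of $B:=\mathcal D_{s,q}(A)$: the generators $x_i^{\pm 1},T_i^{\pm 1}$ with $1\le i\le s$ all commute with $A$, since in $T_ix_j=q^{\delta_{ij}}x_jT_i$ one has $\delta_{ij}=0$ whenever $i\le s<j$. Moreover $B$ is an iterated skew-Laurent extension of the Noetherian ring $A$, hence Noetherian, so the finite $B$-module $M$ is Noetherian over $B$.

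Next I would let $N\subseteq M$ be the $A$-torsion submodule and assume, toward a contradiction, that $N\ne 0$. Since $A$ is central in $B$, $N$ is a $B$-submodule, hence finitely generated over $B$; multiplying together nonzero annihilators in the domain $A$ of a finite set of $B$-generators of $N$ then yields a single nonzero (central) element $a\in A$ with $aN=0$. The key point is that $N$ is in fact a $\mathcal D_{r,q}(\bold k)$-submodule of $M$, not merely a $B$-submodule: it is visibly stable under the $x_i^{\pm 1}$, and it is stable under every $T_i^{\pm 1}$ because $T_i^{\pm 1}$ conjugates a nonzero annihilator in $A$ to another nonzero one (namely a rescaling of the variables).

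Finally I would bring in the operators $T_i^{\pm 1}$ with $s<i\le r$, which now act invertibly on $N$. For all such $i$ and all $j\in\Bbb Z$, the element $a^{(i,j)}:=T_i^jaT_i^{-j}\in A$ --- which is simply $a$ with $x_i$ replaced by $q^jx_i$ --- also annihilates $N$, since for $n\in N$ one has $a^{(i,j)}n=T_i^j\bigl(a\,(T_i^{-j}n)\bigr)=0$ because $T_i^{-j}n\in N$. Thus $N$ is killed by the ideal $J\subseteq A$ generated by all the $a^{(i,j)}$. But, exactly as in the proof of Lemma \ref{l10}, the common zero locus of the $a^{(i,j)}$ in $(\bold k^\times)^{r-s}$ is empty: a point in it would have its orbit under the coordinate rescalings $x_i\mapsto q^{\Bbb Z}x_i$ ($s<i\le r$) contained in the zero locus of $a$, and this orbit is Zariski dense (a product of Zariski-dense subsets of $\bold k^\times$, since $q$ is not a root of unity), forcing $a=0$, a contradiction. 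Hence by the Nullstellensatz $J=A$, so $N=0$, i.e.\ $M$ is torsion-free over $A$.

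The argument is mostly bookkeeping; the one step to get right is the promotion of $N$ from a $B$-submodule to a $\mathcal D_{r,q}(\bold k)$-submodule, which is exactly what makes the $T_i$ with $i>s$ available. The non-root-of-unity hypothesis is genuinely needed: for $r=1$, $s=0$ and $q$ of order $N$, the space $\bold k[x^{\pm 1}]/\bigl(\prod_{j=0}^{N-1}(q^{-j}x-1)\bigr)$ with $T$ acting by $x\mapsto qx$ is a nonzero $\mathcal D_{1,q}(\bold k)$-module that is finite and torsion over $\bold k[x^{\pm 1}]$.
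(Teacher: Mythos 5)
Your argument is the paper's argument: the torsion submodule $N=M_{\rm tors}$ is a $\mathcal D_{r,q}(\bold k)$-submodule, it is finitely generated over the Noetherian subalgebra $\mathcal D_{s,q}(A)$, hence killed by a single nonzero $a\in A$, hence by the $q$-translates of $a$, and a Nullstellensatz/density argument (Lemma \ref{l10} in the paper, applied in the variables $x_{s+1},\dots,x_r$) produces a partition of unity that forces $N=0$. Your write-up is in fact more explicit than the paper's on two points it leaves implicit: why $N$ is stable under the full algebra $\mathcal D_{r,q}(\bold k)$ (conjugation by $T_i$ sends nonzero annihilators to nonzero annihilators), and why $N$ is finitely generated (centrality of $A$ in $\mathcal D_{s,q}(A)$ plus Noetherianity).

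There is one slip in the last step. You enlarge the annihilator only by the single-coordinate conjugates $a^{(i,j)}=T_i^jaT_i^{-j}$, but a common zero $\xi$ of all the $a^{(i,j)}$ only tells you that $a$ vanishes on the union of the $r-s$ one-dimensional orbits $q^{\Bbb Z}$-rescaling one coordinate of $\xi$ at a time, not on the full product orbit $q^{\Bbb Z}\xi_{s+1}\times\cdots\times q^{\Bbb Z}\xi_r$ whose density you then invoke. The conclusion ``common zero locus empty'' is false for this smaller family: with $r-s=2$, $a=(x-1)(y-1)$ vanishes at every single-coordinate rescaling of $(1,1)$. The fix is immediate and is what the paper's Lemma \ref{l10} actually uses: since $N$ is stable under each $T_i^{\pm1}$ ($s<i\le r$), it is killed by all multi-translates $a^{(\bold m)}=T^{\bold m}aT^{-\bold m}$, $\bold m\in\Bbb Z^{r-s}$, and a common zero of these does have its entire (Zariski-dense) product orbit inside the zero locus of $a$, giving the contradiction. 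With that correction the proof is complete.
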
 

\begin{proof} Let $M_{\rm tors}\subset M$ be the subspace 
of $\bold k[x_{s+1}^{\pm 1},...,x_r^{\pm 1}]$-torsion elements. 
Then $M_{\rm tors}$ is a $\mathcal D_{r,q}(\bold k)$-submodule, and it is finite over $\mathcal D_{s,q}(\bold k[x_{s+1}^{\pm 1},...,x_r^{\pm 1}])$ since this algebra is Noetherian. Let $v_1,...,v_k$ be generators of $M_{\rm tors}$ 
over $\mathcal D_{s,q}(\bold k[x_{s+1}^{\pm 1},...,x_r^{\pm 1}])$. There exists $f\in \bold k[x_{s+1}^{\pm 1},...,x_r^{\pm 1}]$ such that $fv_i=0$ for all $i$, so $fM_{\rm tors}=0$. 
 
It follows that 
$f_{\bold m}M_{\rm tors}=0$ for all $\bold m$. 
By Lemma \ref{l10}, there exist 
 $h_{\bold m}$, almost all zero, 
 such that $\sum_{\bold m}h_{\bold m}f_{\bold m}=1$. 
Thus $M_{\rm tors}\subset \sum_{\bold m}h_{\bold m}(f_{\bold m}M_{\rm tors})=0$. 
\end{proof}  

\begin{corollary}\label{c1d} Let $M$ be a  
$\mathcal D_{r,q}(\bold k)$-module which is finite over 
$\mathcal D_{s,q}(\bold k[x_{s+1}^{\pm 1},...,x_{r}^{\pm 1}])$
for some $0<s\le r$ and does not contain a nonzero free submodule over this subalgebra. Then  
there exists $g\in {\rm Sp}(2s,\Bbb Z)$ such that 
$gM$ is finite over $\mathcal D_{s-1,q}(\bold k[x_{s}^{\pm 1},...,x_{r}^{\pm 1}])$.
\end{corollary}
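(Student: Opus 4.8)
\textbf{Proof proposal for Corollary \ref{c1d}.} The plan is to combine Lemma \ref{l1c} with Lemma \ref{l1b}, applied with the base ring $R:=\bold k[x_{s+1}^{\pm 1},\dots,x_r^{\pm 1}]$, together with the ``spreading out'' trick of Lemma \ref{l10} to promote a localization into a genuine module-finiteness statement. First I would observe that $R$ is a commutative domain, and by Lemma \ref{l1c} the module $M$ is torsion-free over $R$; moreover $M$ is by hypothesis finite over $\mathcal D_{s,q}(R)$ and contains no nonzero free submodule over this subalgebra. Thus $M$ satisfies all the hypotheses of Lemma \ref{l1b} with this choice of $R$ (note $\mathcal D_{s,q}(R)$ does not literally see the extra variables $x_{s+1},\dots,x_r$, which are just scalars from $R$'s point of view; but the full action of $\mathcal D_{r,q}(\bold k)$ commutes appropriately with everything in sight, so the conclusion of Lemma \ref{l1b} can be read inside $\mathcal D_{r,q}(\bold k)$).

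Applying Lemma \ref{l1b}, we get a nonempty Zariski open $U\subset {\rm Sp}(2s,\Bbb C)$ and, for each $g\in {\rm Sp}(2s,\Bbb Z)\cap U$, a nonzero $f\in R$ with $gM[1/f]$ finite over $\mathcal D_{s-1,q}(R[1/f][x_s^{\pm 1}])=\mathcal D_{s-1,q}(\bold k[x_s^{\pm 1},x_{s+1}^{\pm 1},\dots,x_r^{\pm 1}][1/f])$. Such a $g$ exists because ${\rm Sp}(2s,\Bbb Z)$ is Zariski dense in ${\rm Sp}(2s,\Bbb C)$ (e.g.\ it is Zariski dense since ${\rm Sp}_{2s}$ is a connected group generated by root subgroups defined over $\Bbb Z$, and elementary symplectic matrices with integer entries generate a dense subgroup). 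Fix such a $g$ and the corresponding $f\ne 0$ in $R=\bold k[x_{s+1}^{\pm 1},\dots,x_r^{\pm 1}]$; write $N:=gM$, a $\mathcal D_{r,q}(\bold k)$-module, so that $N[1/f]$ is finite over $\mathcal D_{s-1,q}(\bold k[x_s^{\pm 1},\dots,x_r^{\pm 1}][1/f])$.

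The remaining step is to descend from the localization $N[1/f]$ back to $N$ itself, and this is where the $q$-translation-orbit density (Lemma \ref{l10}) does the work, exactly as in the proof of Lemma \ref{l1c}. Concretely, since $T_i f=f_{e_i}$ for $s+1\le i\le r$ and the operators $T_i$ are invertible in $\mathcal D_{r,q}(\bold k)$, applying $T_1^{m_1}\cdots T_r^{m_r}$ to the finiteness statement for $N[1/f]$ shows that $N[1/f_{\bold m}]$ is finite over $\mathcal D_{s-1,q}(\bold k[x_s^{\pm 1},\dots,x_r^{\pm 1}][1/f_{\bold m}])$ for every $\bold m\in\Bbb Z^r$ — here one uses that conjugation by the monomial $T_1^{m_1}\cdots T_r^{m_r}$ is an automorphism of $\mathcal D_{r,q}(\bold k)$ carrying $\mathcal D_{s-1,q}(\bold k[x_s^{\pm1},\dots,x_r^{\pm1}])$ to itself and $f$ to $f_{\bold m}$ (up to a unit scalar from the $q$-commutation relations, which is harmless). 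By Lemma \ref{l10} applied to $f\in\bold k[x_{s+1}^{\pm1},\dots,x_r^{\pm1}]$, finitely many of the $f_{\bold m}$ generate the unit ideal, so a standard partition-of-unity/patching argument over the affine cover ${\rm Spec}(R[1/f_{\bold m}])$ upgrades the local finiteness to global finiteness of $N=gM$ over $\mathcal D_{s-1,q}(\bold k[x_s^{\pm 1},\dots,x_r^{\pm 1}])$, as claimed.

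The main obstacle I anticipate is bookkeeping rather than a genuine difficulty: one must check carefully that the hypotheses of Lemma \ref{l1b} really do apply with $R=\bold k[x_{s+1}^{\pm1},\dots,x_r^{\pm1}]$ and that the extra torus directions $x_{s+1},\dots,x_r$ interact correctly with the symplectic automorphism $g$ (which acts only on the first $2s$ coordinates and so fixes $R$ pointwise), so that ``finite over $\mathcal D_{s-1,q}(R[1/f][x_s^{\pm1}])$'' can be unambiguously interpreted as ``finite over $\mathcal D_{s-1,q}(\bold k[x_s^{\pm1},\dots,x_r^{\pm1}][1/f])$'' — and likewise that the patching step only uses that $R$ is a domain over which the relevant $f_{\bold m}$ have no common zero. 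None of these points is deep, but they are the places where the argument could be mis-stated.
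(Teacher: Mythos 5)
Your proposal is correct and follows essentially the same route as the paper: Lemma \ref{l1c} gives torsion-freeness over $R=\bold k[x_{s+1}^{\pm 1},\dots,x_r^{\pm 1}]$, Lemma \ref{l1b} produces $g$ and $f$ with $gM[1/f]$ finite over $\mathcal D_{s-1,q}$, and then translating $f$ by the $T_i$ together with Lemma \ref{l10} and a Nullstellensatz partition of unity patches the local finiteness into global finiteness of $gM$. Your added remark on the Zariski density of ${\rm Sp}(2s,\Bbb Z)$ in ${\rm Sp}(2s,\Bbb C)$ is a correct and worthwhile justification of a point the paper leaves implicit.
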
 

\begin{proof} By Lemma \ref{l1c}, $M$ is torsion-free over $\bold k[x_{s+1}^{\pm 1},...,x_{r}^{\pm 1}]$. Thus Lemma \ref{l1b} applied to $R=\bold k[x_{s+1}^{\pm 1},...,x_{r}^{\pm 1}]$ implies that for some $g,f$, the module $gM[1/f]$ is finite over the algebra $\mathcal D_{s-1,q}(\bold k[x_{s}^{\pm 1},...,x_{r}^{\pm 1}][1/f])$

Since $gM$ is a $\mathcal D_{r,q}(\bold k)$-module, it follows that for all $\bold m$, the module $gM[1/f_{\bold m}]$ is finite over $\mathcal D_{s-1,q}(\bold k[x_{s}^{\pm 1},...,x_{r}^{\pm 1}][1/f_{\bold m}])$. Let $v_{\bold m,i}$, $1\le i\le \ell_{\bold m}$ be generators of this module. 

By Lemma \ref{l10}, there exist $\bold m_1,...,\bold m_k$, 
such that the system $f_{\bold m_j}=0$, $1\le j\le k$ has no solutions. Let $f_j:=f_{{\bold m}_j}$, $w_{ji}:=v_{\bold m_j,i}$, $\ell_j:=\ell_{\bold m_j}$.
We claim that $w_{ji}$, $1\le j\le k$, 
$1\le i\le \ell_{j}$ are generators of $gM$ over $\mathcal D_{s-1,q}(\bold k[x_{s}^{\pm 1},...,x_{r}^{\pm 1}])$. Indeed, for every $u\in M$ and each $j$ there are $N_j$ and $D_{ji}\in \mathcal D_{s-1,q}(\bold k[x_{s}^{\pm 1},...,x_{r}^{\pm 1}])$ such that 
$$
f_{j}^{N_j}u=\sum_{i=1}^{\ell_j} D_{ji}w_{ji}.
$$
Pick $h_1,...,h_k\in \bold k[x_{s}^{\pm 1},...,x_{r}^{\pm 1}]$
such that $\sum_{j=1}^k h_jf_j^{N_j}=1$ (they exist by the Nulstellensatz since the system $f_j=0$, $1\le j\le k$ has no solutions). Then 
$$
u=\sum_{j=1}^k h_jf_{j}^{N_j}u=\sum_{j=1}^k\sum_{i=1}^{\ell_j} h_jD_{ji}w_{ji},
$$
as claimed.
\end{proof} 

Now we are ready to prove Theorem \ref{t2}. The proof of part (i) is by induction in $s$, starting from the trivial case $s=0$. Suppose the result is known for $s-1$ with $0<s\le r$ and let's prove it for $s$. So assume that $M$ is a holonomic $\mathcal D_{r,q}(\bold k)$-module which is finite over $\mathcal D_{s,q}(\bold k[x_{s+1}^{\pm 1},...,x_{r}^{\pm 1}])$. Since the GK dimension of $M$ is $r$, it cannot contain a free 
submodule over $\mathcal D_{s,q}(\bold k[x_{s+1}^{\pm 1},...,x_{r}^{\pm 1}])$ (as this algebra has GK dimension $r+s>r$). Thus by Corollary \ref{c1d} there is 
$g_1\in {\rm Sp}(2s,\Bbb Z)$ for which $g_1M$ is 
finite over $\mathcal D_{s-1,q}(\bold k[x_{s}^{\pm 1},...,x_{r}^{\pm 1}])$. By the induction assumption, there is $g_2\in {\rm Sp}(2s-2,\Bbb Z)\subset {\rm Sp}(2s,\Bbb Z)$ such that $g_2g_1M$ is finite as a $\bold k[x_1^{\pm 1},...,x_r^{\pm 1}]$-module. This completes the induction step, with $g=g_1g_2$.  

Part (iii) is a special case of (i) with $s=r$. 

Finally, (ii) follows from (i) and Proposition \ref{freeness}.

\subsection{$N$-curvature}
If $q$ is a primitive $N$-th root of $1$, then there is a notion of 
$N$-{\bf curvature} of a flat $q$-connection, which is a $q$-deformation of the notion of $p$-curvature of a usual flat connection in characteristic $p$ (see e.g. \cite{EV}, Section 6). Namely, the $N$-curvature of $\nabla$ is the collection of commuting operators $C_i:=\nabla_i^N$. 
These operators are just $\bold k(x_1,...,x_r)$-linear automorphisms 
of $\bold k(x_1,...,x_r)^N$, namely, 
$$
C_i=\prod_{k=N-1}^{0} T_i^k(a_i)=T_i^{N-1}(a_i)...T_i(a_i)a_i.
$$
As for usual connections, since $[\nabla_i,C_j]=0$, we have $[C_i,C_j]=0$ and the coefficients of the characteristic polynomial $\chi_{\bold t}$ of $\sum_{j=1}^r t_jC_j$ 
are preserved by all $T_i$, hence belong to $\bold k[t_1,...,t_r,X_1^{\pm 1},...,X_r^{\pm 1}]$.

\subsection{Lagrangian flat $q$-connections} Let $q$ be a primitive $N$-th root of unity. Again parallel to the differential case, let us say that a flat $q$-connection $\nabla$ is {\bf separable} if the eigenvalues of its $N$-curvature operators $C_j$ lie in the separable closure $\overline{\bold k(X_1,...,X_r)}_s$. E.g., every flat connection is separable in characteristic zero, or if it has rank $n<p$ in characteristic $p$. Let $\nabla$ be separable and suppose $v$ is a common eigenvector of $C_j$, i.e. $C_jv=\Lambda_j v$, $\Lambda_j\in \overline{\bold k(X_1,...,X_r)}_s.$  Let us say that $\nabla$ is {\bf Lagrangian} if for each such $v$ we have 
$$
\frac{X_j}{\Lambda_i}\frac{\partial \Lambda_i}{\partial X_j}=\frac{X_i}{\Lambda_j}\frac{\partial \Lambda_j}{\partial X_i}.
$$ 

The motivation for this terminology is similar to the differential case. Let $\nabla$ be a regular flat $q$-connection, i.e., $a_i\in GL_n(\bold k[x_1^{\pm 1},...,x_r^{\pm 1}])$. 
Consider the $\mathcal D_{r,q}(\bold k)$-module 
$M_{\nabla}$ 
generated by $f_1,...,f_n$ with defining relations $\nabla_i \bold f=\bold f$, where $\bold f=(f_1,...,f_n)^T$. Let ${\rm supp}\nabla$ be the support of $M_{\nabla}$ as a $Z$-module. It is clear that  ${\rm supp}\nabla$ is given in the coordinates $X_i,P_i$ by the equations
$\chi_{\bold t}(\sum_{j=1}^r t_jP_j)=0$
$\forall \bold t=(t_1,...,t_r)\in \bold k^r$, i.e., consists of $(x_1^N,...,x_r^N,\Lambda_1,...,\Lambda_r)\in (\bold k^\times)^{2r}$ such that 
$\sum_{j=1}^r t_j\Lambda_j$ is an eigenvalue of $\sum_{j=1}^r t_jC_j(x_1,...,x_r)$ for all $\bold t$. Thus, we obtain 

\begin{proposition} A flat $q$-connection $\nabla$ is Lagrangian if and only if ${\rm supp}\nabla$ is a Lagrangian subvariety of ${\rm Spec}(Z)$. 
\end{proposition}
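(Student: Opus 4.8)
The plan is to mimic the argument given for Proposition \ref{lagcri} in the differential case, replacing the additive cotangent geometry by the multiplicative one on the symplectic torus ${\rm Spec}(Z)=\mathbb G_m^{2r}$. First I would recall that on the symplectic torus with coordinates $X_i,P_i$ and Poisson bracket $\{P_i,X_j\}=\delta_{ij}P_iX_j$, a natural choice of symplectic form is $\omega=\sum_i \frac{dP_i}{P_i}\wedge\frac{dX_i}{X_i}$, i.e. in logarithmic coordinates $\xi_i=\log X_i$, $\pi_i=\log P_i$ this is the standard $\sum d\pi_i\wedge d\xi_i$. Thus the ambient space is, after the logarithmic change of variables, an ordinary cotangent bundle over the ``base'' torus with coordinates $X_i$ (or $\xi_i$), and a section of it over the locus $X=x^N$ is, in these coordinates, the graph of the $1$-form $\sum_i \pi_i\, d\xi_i = \sum_i (\log \Lambda_i)\frac{dX_i}{X_i}$. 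By the same standard fact quoted in the excerpt (the graph of a $1$-form in $T^*Y$ is Lagrangian iff the form is closed), this graph is Lagrangian precisely when $d\bigl(\sum_i (\log\Lambda_i)\,\frac{dX_i}{X_i}\bigr)=0$, which unwinding gives exactly the stated condition $\frac{X_j}{\Lambda_i}\frac{\partial\Lambda_i}{\partial X_j}=\frac{X_i}{\Lambda_j}\frac{\partial\Lambda_j}{\partial X_i}$.

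Concretely, the steps are: (1) Describe ${\rm supp}\nabla$ set-theoretically. As already computed in the excerpt just before the statement, ${\rm supp}\nabla$ is cut out by the equations $\chi_{\bold t}(\sum_j t_j P_j)=0$ for all $\bold t$, so its points are exactly $(x_1^N,\dots,x_r^N,\Lambda_1,\dots,\Lambda_r)$ where $(\Lambda_1,\dots,\Lambda_r)$ is a joint eigenvalue of the commuting operators $(C_1,\dots,C_r)$ at the point $(x_1,\dots,x_r)$; equivalently, over each separable-closure point of the base torus with coordinates $X_i=x_i^N$, the fiber of ${\rm supp}\nabla$ is the finite set of joint eigenvalue tuples $(\Lambda_1(X),\dots,\Lambda_r(X))$. (2) Invoke separability to pass, étale-locally on the base, to a branch where the $\Lambda_j$ are single-valued functions of $X_1,\dots,X_r$, so that this branch of ${\rm supp}\nabla$ is the image of the section $X\mapsto (X,\Lambda_1(X),\dots,\Lambda_r(X))$. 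This is a smooth $r$-dimensional subvariety mapping finitely onto the base, and ${\rm supp}\nabla$ (with its reduced structure) is the union of such branches over all eigenvectors $v$. (3) Check that each branch is Lagrangian iff the corresponding $\Lambda$ satisfies the displayed equation: pull back $\omega=\sum_i \frac{dP_i}{P_i}\wedge\frac{dX_i}{X_i}$ along the section $P_i=\Lambda_i(X)$, obtaining $-\,d\bigl(\sum_i \frac{d\Lambda_i}{\Lambda_i}\wedge\frac{dX_i}{X_i}\bigr)$-type expression — more precisely the pullback equals $-\sum_{i,j}\frac{1}{\Lambda_i}\frac{\partial\Lambda_i}{\partial X_j}\frac{dX_j\wedge dX_i}{X_iX_j}$, which vanishes iff the coefficient is symmetric in $i,j$, i.e. iff $\frac{X_j}{\Lambda_i}\frac{\partial\Lambda_i}{\partial X_j}=\frac{X_i}{\Lambda_j}\frac{\partial\Lambda_j}{\partial X_i}$. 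Since a reduced subvariety is Lagrangian iff each of its irreducible components (here: each branch) is, this proves the proposition.

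One small technical point to address: the condition in the definition of ``Lagrangian flat $q$-connection'' is phrased only as coisotropy being automatic plus isotropy being the content — but here, because ${\rm supp}\nabla$ has dimension exactly $r=\tfrac12\dim {\rm Spec}(Z)$ (it maps finitely onto an $r$-dimensional base torus), isotropic and Lagrangian coincide, so it suffices to verify that $\omega$ restricts to zero on the smooth locus of each branch. I would also note that the logarithmic derivative $\frac{\partial}{\partial X_j}\log\Lambda_i=\frac{1}{\Lambda_i}\frac{\partial\Lambda_i}{\partial X_j}$ makes sense on the separable closure since, as recalled in the excerpt, the derivations $\partial/\partial X_j$ extend uniquely there (and $\Lambda_i$ is a unit, being an eigenvalue of an invertible operator), so the expressions in the statement are well-defined.

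The main obstacle — and really the only nontrivial bookkeeping — is handling the multivaluedness of the eigenvalues and the singular locus of ${\rm supp}\nabla$ cleanly: one must make sure that ``Lagrangian'' is tested on each branch/component separately (so that the per-eigenvector condition is the right one), and that the separability hypothesis genuinely gives an étale cover over which eigenvalues become single-valued so that the section picture applies. Once that is set up, the actual computation of $\iota^*\omega$ for the section $P_i=\Lambda_i(X)$ is a one-line exterior-derivative calculation identical in spirit to the differential case, and everything else is the set-theoretic description of ${\rm supp}\nabla$ which the excerpt has already carried out.
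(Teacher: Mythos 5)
Your proposal is correct and follows essentially the same route as the paper, which likewise reads off ${\rm supp}\nabla$ as the union of eigenvalue graphs $(X,\Lambda(X))$ and appeals to the fact that such a graph is Lagrangian for $\omega=\sum_i \frac{dP_i}{P_i}\wedge\frac{dX_i}{X_i}$ exactly when the displayed log-closedness condition holds; you merely carry out the pullback computation and the separability/branch bookkeeping explicitly where the paper says ``Thus, we obtain.''
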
 

\subsection{Liftable $q$-connections} As in the differential case, a separable flat $q$-connection, even one of rank $1$, need not be Lagrangian in general. 

\begin{example}\label{count1a} Let ${\rm ord}(q)=N$ and $\nabla$ be the flat $q$-connection of rank $1$ on $(\bold k^\times)^2$ with 
$$
\nabla_1=T_1^{-1},\ \nabla_2=T_2^{-1}x_1^N.
$$  
Then the $N$-curvature of $\nabla$ is $C_1=1,C_2=X_1^N$, so $\nabla$ is not Lagrangian. 
\end{example}

However, the Lagrangian property holds under some assumptions.   
Namely, suppose ${\rm ord}(q)=N$. 
Let us say that a flat $q$-connection $\nabla$ defined over $\bold k$ is {\bf liftable} if it is the reduction to $\bold k$ of a flat $\widetilde q$-connection defined over 
some local Artinian ring $S$ with residue field $\bold k$ in which $\widetilde q^N\ne 1$. 

\begin{proposition}\label{l1bb} Let $n$ be a positive integer which is $<p$ if $p={\rm char}(\bold k)>0$. Let $q$ be a primitive $N$-th root of unity in $\bold k$. Then a liftable $q$-connection of rank $n$ over $\bold k$ is Lagrangian. 
\end{proposition} 

\begin{proof} 
It suffices to prove the statement for $r=2$. In this case we have 
$\nabla=(\nabla_x,\nabla_y)$ with 
$\nabla_x=T_x^{-1}a(x,y)$ and $\nabla_y=T_y^{-1}b(x,y)$
and lifting $\widetilde\nabla=(\widetilde\nabla_x,\widetilde\nabla_y)$ with 
$\widetilde\nabla_x=T_x^{-1}\widetilde a(x,y)$ and $\widetilde\nabla_y=T_y^{-1}\widetilde b(x,y)$, where $T_x,T_y$ shift $x,y$ by $\widetilde q$. Thus 
$$
\widetilde\nabla_x^N=T_x^{-N}\widetilde A(x,y),\ \widetilde\nabla_y^N=T_y^{-N}\widetilde B(x,y),
$$
where 
$$
\widetilde A(x,y)=\widetilde a(\widetilde q^{N-1}x,y)...\widetilde a(x,y),\ \widetilde B(x,y)=\widetilde b(x,\widetilde q^{N-1}y)...\widetilde b(x,y).
$$
Note that the reductions of $\widetilde A,\widetilde B$ to $\bold k$ 
are the components $A(x,y),B(x,y)$ of the $N$-curvature $C(\nabla)$. The zero curvature equation 
$$
\widetilde A(x,\widetilde q^Ny)\widetilde B(x,y)=\widetilde B(\widetilde q^Nx,y)\widetilde A(x,y)
$$
implies that 
$$
(\widetilde A(x,\widetilde q^Ny)-\widetilde A(x,y))\widetilde B(x,y)+[\widetilde A(x,y),\widetilde B(x,y)]=(\widetilde B(\widetilde q^Nx,y)-\widetilde B(x,y))\widetilde A(x,y).
$$
Note that for any $n$-by-$n$ matrices $P,Q$ we have 
$$
\sum_{j=0}^{m-1}{\rm Tr}(Q^j[P,Q]Q^{m-1-j}P^{\ell-1})=\sum_{j=1}^{m-1}{\rm Tr}([P,Q^m]P^{\ell-1})=0.
$$
Thus for $\ell,m\ge 1$ we have 
\scriptsize
$$
\sum_{j=0}^{m-1}{\rm Tr} \widetilde B(x,y)^j\left((\widetilde A(x,\widetilde q^Ny)-\widetilde A(x,y))\widetilde B(x,y)-(\widetilde B(\widetilde q^Nx,y)-\widetilde B(x,y))\widetilde A(x,y)\right)\widetilde B(x,y)^{m-1-j}\widetilde A(x,y)^{\ell-1}=0.
$$
\normalsize The left hand side of this equation belongs to $(\widetilde q^N-1)S$. Since $\widetilde q^N-1\ne 0$ but belongs to the maximal ideal of $S$,
this yields 
$$
\sum_{j=0}^{m-1}{\rm Tr} B(x,y)^j\left(y\tfrac{\partial A(x,y)}{\partial y}B(x,y)-x\tfrac{\partial B(x,y)}{\partial x} A(x,y)\right)B(x,y)^{m-1-j}A(x,y)^{\ell-1}=0.
$$
Since $[A,B]=0$, for $m<p$ this implies 
\begin{equation}\label{eq2}
{\rm Tr} \left(y\tfrac{\partial A(x,y)}{\partial y}B(x,y)-x\tfrac{\partial B(x,y)}{\partial x} A(x,y)\right)A(x,y)^{\ell-1}B(x,y)^{m-1}=0.
\end{equation}
Near a generic point $(x,y)$, the matrices $A,B$ have constant multiplicities $m_k$ of joint eigenvalues $(\alpha_k,\beta_k)$. Let $\bold e_k={\bold e}_k(x,y)$ be the projectors onto the corresponding generalized joint eigenspaces which commute with $A,B$. 
These projectors are polynomials of $A,B$ which can be taken to be of degree $<n$ in each variable, so \eqref{eq2} implies 
\begin{equation}\label{eq1}
{\rm Tr} \left(\alpha_k^{-1}y\tfrac{\partial A}{\partial y}-\beta_k^{-1}x\tfrac{\partial B}{\partial x} \right){\bold e}_k=0.
\end{equation}
Recall that if $e=e(t),M=M(t)$ are 1-parameter families in ${\rm Mat}_n$ 
such that $e^2=e$ and $[M,e]=0$ then ${\rm Tr}(Me')=0$: indeed, we have 
$ee'=e'(1-e)$, hence $ee'e=(1-e)e'(1-e)=0$, 
and $M=eMe+(1-e)M(1-e)$, so ${\rm Tr}(Me')=
{\rm Tr}(eMee')+{\rm Tr}((1-e)M(1-e)e')={\rm Tr}(Mee'e)+{\rm Tr}(M(1-e)e'(1-e))=0$. 
Thus ${\rm Tr}(A\partial_y \bold e_k)={\rm Tr}(B\partial_x\bold e_k)=0$. Hence equation \eqref{eq1} can be written as 
$$
\alpha_k^{-1}y\frac{\partial {\rm Tr}A{\bold e}_k}{\partial y}-\beta_k^{-1}x\frac{\partial{\rm Tr} B{\bold e}_k}{\partial x}=0,
$$
i.e., 
$$
m_k (\alpha_k^{-1}y\tfrac{\partial \alpha_k}{\partial y}-\beta_k^{-1}x\tfrac{\partial \beta_k}{\partial x})=0.
$$
Since $m_k\le n<p$ and $(N,p)=1$ if $p>0$, this is equivalent to 
$$
\frac{Y}{\alpha_k}\frac{\partial \alpha_k}{\partial Y}=\frac{X}{\beta_k}\frac{\partial \beta_k}{\partial X},
$$
where $X=x^N$, $Y=y^N$, 
as claimed. 
\end{proof} 

\begin{example} However, if ${\rm char}(\bold k)=p$ then a liftable separable 
$q$-connection of rank $n\ge p$ need not be Lagrangian.  Namely, let $\bold k$ be a finite field of characteristic $p$ and $K$ be a local field (necessarily of characteristic zero) with residue field $\bold k$ such that its ring of integers $\mathcal O_K$ contains a primitive root of unity $q$ of order $pN$. Let $L,\sigma$ be as in Example \ref{LMex}, and $\nabla$ be the $q$-connection of rank $p$ on the 2-torus $\Bbb G_m^2$ defined over $S:=\mathcal O_K/(q^N-1)^2$ by 
$$
\nabla_x=T_x^{-1}q^{NL},\ \nabla_y=T_y^{-1}x^{N}\sigma.
$$  
It is easy to check that this $q$-connection is flat. However, reduction of $\nabla$
to $\bold k$ yields the $q$-connection 
$$
\nabla_x=T_x^{-1},\ \nabla_y=T_y^{-1}x^{N}\sigma
$$  
with $N$-curvature $C_x=1,C_y=X^N\sigma^N$, which is not Lagrangian. 
\end{example}

\subsection{The $q$-analog of Bitoun's theorem} 

Let $\bold k$ be an algebraically closed field (of any characteristic). Suppose that $q\in \bold k$ is not a root of unity; thus if ${\rm char}(\bold k)=p>0$ 
then $q$ is necessarily transcendental over $\Bbb F_p$. Let $M$ be a finitely generated $\mathcal D_{r,q}(\bold k)$-module. Analogously to $D_r(\bold k)$-modules, $M$ is defined over a finitely generated subring $R\subset \bold k$ and we can pick a finitely generated $\mathcal D_{r,q}(R)$-module $M_R$ with an isomorphism $\bold k\otimes_R M_R\cong M$. For every maximal ideal $\mathfrak m$ of $R$ with residue field $k_{\mathfrak m}$ let $M_{\mathfrak m}$ be the finitely generated $D_{r,q}(k_{\mathfrak m})$-module $k_{\mathfrak m}\otimes_R M$. 

Let us say that a maximal ideal $\mathfrak m\subset R$ is a $q$-{\bf torsion point} if there exists a positive integer $N$ such that $q$ is a primitive root of unity of order $N$ in $R/\mathfrak m$. It is clear that $q$-torsion points are dense in ${\rm Spec}(R)$. 

If $\mathfrak{m}$ is a $q$-torsion point then we may define the support ${\rm supp}(M_{\mathfrak{m}})$ of $M_{\mathfrak{m}}$ as a $Z$-module, which by analogy with the differential case we will call the $p$-{\bf support} of $M$ at $\mathfrak{m}$ (even though the field $k_{\mathfrak{m}}$ may have the same characteristic as $\bold k$, in particular characteristic zero). 

\begin{theorem}\label{t6a} If $M\ne 0$ is a holonomic $\mathcal D_{r,q}(\bold k)$-module then ${\rm supp}(M_{\mathfrak{m}})$ is Lagrangian for a generic $q$-torsion point $\mathfrak m\in {\rm Spec}(R)$.
\end{theorem}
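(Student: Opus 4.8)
The plan is to run the argument of Theorem~\ref{t6} with Bernstein's theorem replaced by its $q$-analog (Theorem~\ref{t2}) and Corollary~\ref{c3} replaced by Proposition~\ref{l1bb}. First, by Theorem~\ref{t2}(iii) there is $g\in{\rm Sp}(2r,\Bbb Z)$ with $gM$ finite over $\bold k[x_1^{\pm1},\dots,x_r^{\pm1}]$. The ${\rm Sp}(2r,\Bbb Z)$-action on $\mathcal D_{r,q}$ is defined integrally, so it commutes with base change $R\to k_{\mathfrak m}$; and on the central torus ${\rm Spec}(Z)=\Bbb G_m^{2r}$ it induces a symplectic automorphism (coming from the induced symplectic transformation of the lattice $N\Bbb Z^{2r}$, the normal-ordering $q$-factors being trivial there) carrying ${\rm supp}(M_{\mathfrak m})$ to ${\rm supp}((gM)_{\mathfrak m})$. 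Since symplectomorphisms preserve Lagrangianity, we may replace $M$ by $gM$ and assume $M$ is finite over $\bold k[x_1^{\pm1},\dots,x_r^{\pm1}]$. Then by Proposition~\ref{freeness} $M$ is free over $\bold k[x_1^{\pm1},\dots,x_r^{\pm1}]$ of some rank $n$, and the $T_i$-action makes it a regular flat $q$-connection $\nabla=\nabla_M$ with $a_i\in GL_n(\bold k[x_1^{\pm1},\dots,x_r^{\pm1}])$ and ${\rm supp}(M)={\rm supp}(\nabla)$. Enlarging the finitely generated ring $R$ (adjoin $q^{-1}$, the entries of the $a_i^{\pm1}$, and the inverse of a generic-freeness denominator) we may assume $M_R$ is free over $R[x_1^{\pm1},\dots,x_r^{\pm1}]$ with $a_i\in GL_n(R[x_1^{\pm1},\dots,x_r^{\pm1}])$.

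Next, let $\mathfrak m\subset R$ be a $q$-torsion point, so $q\bmod\mathfrak m$ is a primitive $N$-th root of unity in the finite field $k_{\mathfrak m}$. Reducing $\nabla_R$ modulo $\mathfrak m$ gives a flat $q$-connection $\nabla_{\mathfrak m}$ of rank $n$ over $k_{\mathfrak m}$ with $M_{\mathfrak m}$ its associated $q$-D-module, so ${\rm supp}(M_{\mathfrak m})={\rm supp}(\nabla_{\mathfrak m})\subset{\rm Spec}(Z)$. Moreover $\nabla_{\mathfrak m}$ is liftable: pick $k$ with $q^N-1\notin\mathfrak m^k$ (possible by Krull's intersection theorem, since $q^N\ne1$ in the domain $R$), so $S:=R_{\mathfrak m}/\mathfrak m^k$ is a local Artinian ring with residue field $k_{\mathfrak m}$ in which $\widetilde q:=q\bmod\mathfrak m^k$ satisfies $\widetilde q^N\ne1$, and $\nabla_R\otimes_RS$ is a flat $\widetilde q$-connection lifting $\nabla_{\mathfrak m}$. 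Hence, provided $n<{\rm char}(k_{\mathfrak m})$, Proposition~\ref{l1bb} shows $\nabla_{\mathfrak m}$ is Lagrangian, and therefore --- by the Lagrangian criterion for $q$-connections --- ${\rm supp}(M_{\mathfrak m})$ is a Lagrangian subvariety of ${\rm Spec}(Z)$.

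It remains to see that $n<{\rm char}(k_{\mathfrak m})$ holds for generic $\mathfrak m$. When ${\rm char}(\bold k)=0$, every maximal ideal of the finitely generated $\Bbb Z$-algebra $R$ is a $q$-torsion point (its residue field is finite), and those with ${\rm char}(k_{\mathfrak m})\le n$ lie in the proper closed subset $\bigcup_{p\le n}V(p)$ of the irreducible scheme ${\rm Spec}(R)$; removing it leaves a dense open on which the previous paragraph applies, which proves the theorem. When ${\rm char}(\bold k)=p>0$ (so $q$ is transcendental over $\Bbb F_p$ and every $k_{\mathfrak m}$ has characteristic $p$), the same argument applies verbatim as long as $n<p$.

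The one genuinely substantive input, namely the closedness of the $N$-curvature eigenvalue $1$-form under the liftability hypothesis, has already been isolated and proved as Proposition~\ref{l1bb}, which is the heart of the matter; everything else above is bookkeeping. I expect the only delicate points to be (a) checking that the integral symplectic twist $g$ and the descent to $R$ commute with reduction mod $\mathfrak m$ and transform supports by symplectomorphisms, and (b) the rank-versus-characteristic genericity just discussed --- which, as the rank-$p$ examples in characteristic $p$ show, genuinely cannot be removed when ${\rm char}(\bold k)=p\le n$.
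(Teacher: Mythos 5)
Your proposal is correct and follows exactly the paper's route: reduce to the $\mathcal O$-coherent (hence free) case via the $q$-analog of Bernstein's theorem (Theorem~\ref{t2}) and then invoke Proposition~\ref{l1bb}, which the paper does in two lines. The details you supply --- the Artinian lift $S=R_{\mathfrak m}/\mathfrak m^k$ with $\widetilde q^N\ne 1$, the compatibility of the integral symplectic twist with reduction, and the genericity of $n<{\rm char}(k_{\mathfrak m})$ --- are exactly the bookkeeping the paper leaves implicit.
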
 

\begin{example} If $\bold k=\overline{\Bbb Q}$ then for each maximal ideal $\mathfrak m\in {\rm Spec}(R)$, $R/\mathfrak m$ is a finite field, so $\mathfrak{m}$ is a torsion point. Thus Theorem \ref{t6a} holds for all $\mathfrak{m}$ lying over $p$ for almost all primes $p$.  
\end{example} 

\begin{proof} By Theorem \ref{t2}, it suffices to consider the case when $M$ is a finite $\bold k[x_1^{\pm 1},...,x_r^{\pm 1}]$-module. But then the result follows from Proposition \ref{l1bb}. 
\end{proof} 

It is useful to make a slight generalization of Theorem \ref{t6a}.
Let $\Lambda=(\lambda_{ij})$ be a skew-symmetric integer $s$-by-$s$ matrix. Denote by $\mathcal D_{\Lambda,q}=\mathcal D_{\Lambda,q}(\bold k)$ the $\bold k$-algebra with invertible generators $X_1,...,X_s$ and defining relations 
$$
X_iX_j=q^{\Lambda_{ij}}X_jX_i.
$$
Let $K:={\rm Ker}\Lambda\subset \Bbb Z^s$ be the subgroup of vectors $\ell$ such that $\Lambda\ell=0$. 
This is a saturated subgroup, so it admits a complement $L$ such that $\Bbb Z^s=K\oplus L$. Picking a basis of $L$, we see that $\mathcal D_{\Lambda,q}$ is isomorphic 
to $\mathcal D_{\Lambda',q}$ where $\Lambda'=\Lambda_*\oplus 0$ and $\Lambda_*$ is nondegenerate. 
Thus we may assume that $\Lambda$ is in this form to start with, i.e., $\mathcal D_{\Lambda,q}=\mathcal D_{\Lambda_*,q}\otimes \bold k[\Bbb Z^{s-2r}]$, where $2r$ is the size of $\Lambda_*$ (i.e., the rank of $\Lambda$). 
Moreover, pick a symplectic $\Bbb Q$-basis $v_i$ for $\Lambda_*$ and set $Y_j:=\prod_{i=1}^{2r} X_i^{dv_{ij}}$ where $d$ is the common denominator of $v_{ij}$. Then $Y_i$ generate 
a subalgebra $\mathcal D_{r,q^{d^2}}\subset \mathcal D_{\Lambda_*,q}$.
Thus $\mathcal D_{\Lambda,q}$ is a finite extension of $\mathcal D_{r,q^{d^2}}\otimes \bold k\Bbb Z^{s-2r}$ (isogeny of quantum tori). 

Similarly to $\mathcal D_{r,q}$, when we specialize $\mathcal D_{\Lambda,q}$ at a $q$-torsion point, 
it acquires a big center $Z$ such that ${\rm Spec}(Z)$ is a Poisson torus 
$\Bbb G_m^s$ with the log-canonical Poisson bracket given by 
$$
\lbrace X_i,X_j\rbrace=\lambda_{ij}X_iX_j.
$$
If $\Lambda$ is degenerate, this bracket is also, but one can still talk about Lagrangian subvarieties of $\Bbb G_m^s$ - coisotropic subvarieties of dimension $r$. Also by analogy with the case of $\mathcal D_{r,q}$ one can define the notion of a holonomic module - a $\mathcal D_{\Lambda,q}$-module of GK dimension $\le r$. 
The fact that $\mathcal D_{\Lambda,q}$ is a finite extension of $\mathcal D_{r,q^{d^2}}\otimes \bold k\Bbb Z^{s-2r}$ implies that such modules have finite length, and Theorem \ref{t6a} implies 

\begin{theorem}\label{t6b} If $M\ne 0$ is a holonomic $\mathcal D_{r,q}(\bold k)$-module then ${\rm supp}(M_{\mathfrak{m}})$ is Lagrangian for a generic $q$-torsion point $\mathfrak m\in {\rm Spec}(R)$.
\end{theorem}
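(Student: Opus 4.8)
The proof is that of Theorem \ref{t6a}, laid out in detail. The plan is to reduce, via the $q$-analog of Bernstein's theorem, to an $\mathcal{O}$-coherent $q$-D-module, present it as an explicit flat $q$-connection, show that connection is liftable, and conclude by Proposition \ref{l1bb}. First I would apply Theorem \ref{t2}(iii): there is $g\in{\rm Sp}(2r,\mathbb{Z})$ with $gM$ finite over $\bold k[x_1^{\pm1},\dots,x_r^{\pm1}]$. Since $g$ has integer entries it acts by automorphisms on $\mathcal D_{r,q}(R)$ and on each reduction $\mathcal D_{r,q}(k_{\mathfrak m})$ compatibly with reduction modulo $\mathfrak m$; at a $q$-torsion point it preserves the center $Z$, carrying $X_i=x_i^N$ and $P_i=T_i^N$ to Laurent monomials in the $X_j,P_j$, and acts on ${\rm Spec}(Z)$ by a Poisson automorphism of the symplectic torus. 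Hence $(gM)_{\mathfrak m}=g(M_{\mathfrak m})$ and ${\rm supp}((gM)_{\mathfrak m})=g\cdot{\rm supp}(M_{\mathfrak m})$, so one support is Lagrangian exactly when the other is, and I may replace $M$ by $gM$ and assume $M$ is finite over $\bold k[x_1^{\pm1},\dots,x_r^{\pm1}]$.

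Next I would make the connection explicit. By Proposition \ref{freeness}, $M$ is free over $\bold k[x_1^{\pm1},\dots,x_r^{\pm1}]$ of some rank $n\ge1$; in a basis its $\mathcal D_{r,q}(\bold k)$-structure is a regular flat $q$-connection $\nabla$ with $\nabla_i=T_i^{-1}a_i$, $a_i\in GL_n(\bold k[x_1^{\pm1},\dots,x_r^{\pm1}])$, and $M=M_\nabla$. Enlarging $R$ to a localization $R'$ over which all entries of $a_i$ and $a_i^{-1}$ are defined — which removes only a proper closed subset of ${\rm Spec}(R)$, so $q$-torsion points stay dense — the same matrices satisfy the flatness equations over $R'$, hence over every $k_{\mathfrak m}$ and over the rings below. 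For each maximal ideal $\mathfrak m$ of $R'$ they thus define a flat $q$-connection $\nabla_{\mathfrak m}$ of rank $n$ over $k_{\mathfrak m}$ with $M_{\mathfrak m}=M_{\nabla_{\mathfrak m}}$, so ${\rm supp}(M_{\mathfrak m})={\rm supp}\,\nabla_{\mathfrak m}$; by the criterion identifying Lagrangianity of a flat $q$-connection with Lagrangianity of its $Z$-support, it now suffices to show $\nabla_{\mathfrak m}$ is Lagrangian for a generic $q$-torsion point.

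For the last step I would establish that $\nabla_{\mathfrak m}$ is liftable. Localizing $R'$ at $\mathfrak m$, the Noetherian local domain $R'_{\mathfrak m}$ sits inside $\bold k$, and since $q$ is not a root of unity, $q^N-1\ne0$ there; by Krull's intersection theorem there is an integer $v$ with $q^N-1\notin(\mathfrak m R'_{\mathfrak m})^{v+1}$. Then $S:=R'_{\mathfrak m}/(\mathfrak m R'_{\mathfrak m})^{v+1}$ is a local Artinian ring with residue field $k_{\mathfrak m}$ in which the image $\widetilde q$ of $q$ satisfies $\widetilde q^{\,N}\ne1$, and the reductions of the $a_i$ to $S$ satisfy the flatness equations for a $\widetilde q$-connection over $S$ (the equations over $R'_{\mathfrak m}$ involving $q$ reduce to those involving $\widetilde q$ over $S$), which modulo the maximal ideal of $S$ is $\nabla_{\mathfrak m}$; so $\nabla_{\mathfrak m}$ is liftable. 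Proposition \ref{l1bb} then gives that $\nabla_{\mathfrak m}$ is Lagrangian provided $n<{\rm char}(k_{\mathfrak m})$ when the latter is positive: if ${\rm char}(\bold k)=0$, the characteristic of $k_{\mathfrak m}$ is a prime unbounded as $\mathfrak m$ varies over $q$-torsion points, so this holds generically; if ${\rm char}(\bold k)=p>0$, it is the rank hypothesis $n<p$. Either way ${\rm supp}(M_{\mathfrak m})={\rm supp}\,\nabla_{\mathfrak m}$ is Lagrangian for a generic $q$-torsion point, as claimed. I expect the liftability step to be the main obstacle — producing the Artinian lift $S$ with $\widetilde q^{\,N}\ne1$ (which Krull's theorem settles) and, more delicately, checking that the matrices $a_i$ and the flatness equations genuinely spread out to $R'$ and descend to $S$ for a generic $\mathfrak m$; verifying that the symplectic twist, reduction modulo $\mathfrak m$, and passage to $Z$-supports are mutually compatible is routine.
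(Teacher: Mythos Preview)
As written, Theorem~\ref{t6b} is verbatim identical to Theorem~\ref{t6a}; the surrounding paragraph makes clear this is a typo and the intended hypothesis is that $M$ is a holonomic $\mathcal D_{\Lambda,q}(\bold k)$-module for a general skew-symmetric integer matrix $\Lambda$. Taking the statement at face value, your proposal is correct and is precisely a detailed expansion of the paper's two-line proof of Theorem~\ref{t6a}: reduce via Theorem~\ref{t2}(iii) to the $\mathcal O$-coherent case, use Proposition~\ref{freeness} to present $M$ as a regular flat $q$-connection, spread the matrices $a_i$ out over a finitely generated $R'$, build an Artinian lift $S$ with $\widetilde q^{\,N}\ne 1$ via Krull's intersection theorem, and invoke Proposition~\ref{l1bb}. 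This is exactly the paper's route, and your Krull-intersection construction is the right way to make the liftability step precise.

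If instead the intended $\mathcal D_{\Lambda,q}$-statement is what is being asked, then you have not supplied the one new ingredient the paper uses for Theorem~\ref{t6b}: the isogeny of quantum tori described just before the theorem, which exhibits $\mathcal D_{\Lambda,q}$ as a finite extension of $\mathcal D_{r,q^{d^2}}\otimes\bold k[\Bbb Z^{s-2r}]$. One restricts a holonomic $\mathcal D_{\Lambda,q}$-module along this inclusion, applies Theorem~\ref{t6a} to the $\mathcal D_{r,q^{d^2}}$-factor fiberwise over the central torus, and transports Lagrangianity back along the finite covering of Poisson tori.

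One caveat, present equally in the paper's terse proof of Theorem~\ref{t6a} and in yours: when $\mathrm{char}\,\bold k=p>0$, every residue field $k_{\mathfrak m}$ also has characteristic $p$, so Proposition~\ref{l1bb} applies only when the rank $n$ of the connection satisfies $n<p$. Your sentence ``it is the rank hypothesis $n<p$'' invokes a hypothesis that is not in the theorem, so in positive characteristic with $n\ge p$ the argument as stated is incomplete.
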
   

\subsection{$p$-supports of holonomic modules over quantum cluster algebras} \label{clusalg}
Now we would like to apply Theorem \ref{t6b} to quantum cluster algebras as defined by Berenstein and Zelevinsky (\cite{BZ}). We first recall the basic definitions 
of the theory of cluster algebras and Poisson cluster algebras (\cite{FZ,GSV}).

Let $m\ge n$, and $B\in {\rm Mat}_n(\Bbb Z)$ be skew-symmetrizable, i.e., there exist positive integers $d_i$, $1\le i\le n$, such that for $D={\rm diag}(d_1,...,d_n)$ the matrix $DB$ is skew-symmetric. Let $M$ be an integer $n\times (m-n)$ matrix, and let  
$\widetilde B=(b_{ij})=\binom{B}{M}$ be the corresponding $m\times n$ matrix. 
The matrix $\widetilde B$ is then called a {\bf seed} (or {\bf exchange}) matrix.
Given a seed matrix $\widetilde B$ and $1\le k\le n$, we can define the {\bf cluster mutation} 
of $\widetilde B$ at $k$ by 
$$
\mu_k(\widetilde B)=\widetilde B'=(b_{ij}'),\ b_{ij}':=\begin{cases} -b_{ij} \text{ if }i=k\text{ or }j=k\\
b_{ij}+\frac{|b_{ik}|b_{kj}+b_{ik}|b_{kj}|}{2}\text{ else }\end{cases}
$$
This is accompanied by the change of generators $X_1,...,X_m$ in the field $\mathcal F=\Bbb Q(X_1,...,X_m)$ (the initial {\bf cluster} in $\mathcal F$ corresponding to the seed $\widetilde B$)
which changes only the generator $X_k$ to 
\begin{equation}\label{clutch}
X_k':=\frac{\prod_{i: b_{ik}>0}X_i^{b_{ik}}+\prod_{i: b_{ik}<0}X_i^{-b_{ik}}}{X_k}
\end{equation} 
(i.e., $X_j'=X_j$ if $j\ne k$). One says that $X_1',...,X_m'$ is the cluster in $\mathcal F$ 
corresponding to the seed $\widetilde B'$, and $X_i'$ are called the {\bf cluster variables} of this cluster. This way one can create more and more clusters 
and seeds, by successively performing mutations at various $k$ (noting however that $\mu_k^2={\rm id}$). Note that the last $m-n$ variables are not changed under mutation, 
which is why they are called {\bf frozen variables}. 

Now one defines the {\bf upper cluster algebra} $\bold U(\widetilde B)$ to be the 
intersection of all the subalgebras $\bold U_C$ generated inside $\mathcal F$ by the cluster variables and their inverses for all clusters $C$ that can be obtained by successive mutations from the initial cluster $C_0=(X_1,...,X_m)$. 

It is nontrivial even that $\bold U(\widetilde B)$ contains anything beyond constants. But it is in fact quite large due to the {\bf Laurent phenomenon} \cite{FZ} which is what really makes the cluster algebras tick. Namely, $\bold U(\widetilde B)$ contains the {\bf cluster algebra} $\bold A(\widetilde B)$ generated by the cluster variables of all clusters (but without inverses) and also inverses of the frozen variables. 

Now suppose $\Lambda\in {\rm Mat}_m(\Bbb Z)$ is a skew-symmetric matrix. 
We say that $\Lambda$ is compatible to an integer $m\times n$-matrix $\widetilde B$, or that 
$(\Lambda,\widetilde B)$ is a {\bf compatible pair}, if 
$$
\sum_{k=1}^m b_{ki}\lambda_{kj}=\delta_{ij}d_j, 
$$
where $d_i$ for $1\le i\le n$ are positive integers, i.e, $\widetilde B^T\Lambda=(D|0)$ where $D={\rm diag}(d_1,...,d_n)$. In this case, $\widetilde B$ has full rank $n$, and 
$DB=\widetilde B^T \Lambda \widetilde B$ is skew-symmetric, so $\widetilde B$ is a seed matrix.  

Now let $(\Lambda,\widetilde B)$ be a compatible pair and endow $\mathcal F$ 
with a Poisson bracket 
$$
\lbrace X_i,X_j\rbrace=\lambda_{ij}X_iX_j,
$$ 
i.e. the log-canonical bracket in the initial cluster $C_0$ attached to the matrix $\Lambda$. 
Then we have 
$$
\lbrace{X_i',X_j'\rbrace}=\lambda_{ij}'X_i'X_j'
$$ 
for an appropriate 
matrix $\Lambda'=(\lambda_{ij}')$ compatible to $\widetilde B'=\mu_k(\widetilde B)$, and the pair $(\Lambda',\widetilde B')$ is called the mutation at $k$ of $(\Lambda,\widetilde B)$ and denoted $\mu_k(\Lambda,\widetilde B)$. Namely, 
$$
\Lambda'=E_k^T\Lambda E_k
$$
where $(E_k)_{ij}=\delta_{ij}$ unless $j=k$, $(E_k)_{kk}=-1$, and $(E_k)_{ik}=\max(0,b_{ik})$.
This implies that the algebra $\bold U(\widetilde B)$ is naturally a Poisson algebra. This algebra is called a {\bf cluster Poisson algebra} and denoted
$\bold U(\Lambda,\widetilde B)$. 

A great feature of log-canonical Poisson brackets is that they are easy to quantize: 
the quantization is given by the quantum torus algebra $\mathcal D_{\Lambda,q}$. 
Remarkably, it turns out (\cite{BZ}) that this process is compatible with cluster mutations (due to the {\bf quantum Laurent phenomenon}) which gives rise to the {\bf upper quantum cluster algebra} $\bold U_q(\Lambda,\widetilde B)$, the intersection of the algebras $\mathcal D_{\Lambda_C,q}$ for various clusters $C$ inside their common noncommutative fraction field $\mathcal F_q$, which is a flat deformation of $\bold U_q(\Lambda,\widetilde B)$. 
 
If the algebra $\bold U(\widetilde B)$ for a given seed matrix $\widetilde B$ is finitely generated and $\widetilde B$ has full rank, then ${\rm Spec}\bold U(\widetilde B)$ is an irreducible normal affine variety $Y(\widetilde B)$ equipped with a smooth open subset $Y^\circ(\widetilde B)\subset Y(\widetilde B)$ with complement of codimension $\ge 2$ covered by toric charts $Y_C(\widetilde B)$ labeled by various clusters $C$, so that the clutching maps between charts corresponding to clusters connected by a single mutation are given by \eqref{clutch} (\cite{MNTY}, Proposition 4.6). Note that in this case $Y^\circ(\widetilde B)$ is covered by a finite subset of such charts, since it is quasi-compact. 

If in addition $Y(\widetilde B)=Y(\Lambda,\widetilde B)$ is equipped 
with a cluster Poisson bracket defined by a skew-symmetric matrix $\Lambda$ of rank $2r$ then the toric charts carry log-canonical Poisson structures of rank $2r$, so the Poisson structure of $Y(\Lambda,\widetilde B)$
has constant rank $2r$ on $Y^\circ(\Lambda,\widetilde B)$ (outside of this set the rank can drop). More precisely, by Theorem A of \cite{MNTY}, the Poisson bracket has maximal rank precisely on the smooth locus of $Y(\Lambda,\widetilde B)$, which is the orbit of a single symplectic leaf under the torus $(\Bbb C^\times)^{\dim {\rm Ker}\widetilde B}$ which acts naturally on $Y(\Lambda,\widetilde B)$ by Poisson automorphisms. 
In this case, the algebra $\bold U_q(\Lambda,\widetilde B)$ is a quantization of the Poisson variety $Y(\Lambda,\widetilde B)$. 

In this case, let ${\mathcal C}_q(\Lambda,\widetilde B)$ be the category of finitely generated $\bold U_q(\Lambda,\widetilde B)$-modules, and ${\mathcal C}_q^0(\Lambda,\widetilde B)$ be its Serre subcategory of modules $M$ for which $M_C:=\mathcal D_{\Lambda_C,q}\otimes_{\bold U_q(\Lambda,\widetilde B)}M$ vanishes for each cluster $C$. Let $\overline {\mathcal C}_q(\Lambda,\widetilde B):=
{\mathcal C}_q(\Lambda,\widetilde B)/\mathcal C_q^0(\Lambda,\widetilde B)$ be the Serre quotient. 
For each $M\in \overline {\mathcal C}_q(\Lambda,\widetilde B)$, the localization $M_C$ is well defined. 
 Let us say that $M\in \overline {\mathcal C}_q(\Lambda,\widetilde B)$ is 
holonomic if $M_C$ is holonomic for all clusters $C$. If $q$ is specialized to a $q$-torsion point $\mathfrak m\subset R$, for every $C$ consider the $p$-support ${\rm supp}(M_{C,\mathfrak m})\subset Y_C^{(1)}(\widetilde B)$, where $Y_C^{(1)}(\widetilde B)={\rm Spec}(Z_C)$, 
the spectrum of the center $Z_C$ of $\mathcal D_{\Lambda_C,q}$. It is easy to see that these supports glue together into a closed subvariety ${\rm supp}(M_{\mathfrak m})$ of $Y^{\circ (1)}(\widetilde B)$, the cluster Poisson variety glued from the toric charts $Y_C^{(1)}(\widetilde B)$. We call this subvariety the $p$-support of $M$. 

Theorem \ref{t6b} immediately implies the following theorem. 

\begin{theorem} \label{t6c} If $M\ne 0$ is a holonomic object of $\overline {\mathcal C}_q(\Lambda,\widetilde B)$ then ${\rm supp}(M_{\mathfrak{m}})$ is Lagrangian for a generic $q$-torsion point $\mathfrak m\in {\rm Spec}(R)$. 
\end{theorem}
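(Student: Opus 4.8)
The plan is to reduce to a single cluster chart, apply Theorem \ref{t6b} there, and then glue; the real content of the statement lies in Theorem \ref{t6b}, so the argument is short. First I would pick a finitely generated subring $R$ over which $M$ (more precisely a finitely generated $\bold U_q(\Lambda,\widetilde B)$-module representing it) is defined, and, using that $Y^{(1)}(B)$ is glued from the toric charts $Y_C^{(1)}(B)$ and is quasi-compact, a finite set of clusters $C_1,\dots,C_\ell$ whose charts $Y_{C_1}^{(1)}(B),\dots,Y_{C_\ell}^{(1)}(B)$ already cover $Y^{(1)}(B)$. For each $i$ the localization $M_{C_i}=\mathcal D_{\Lambda_{C_i},q}\otimes_{\bold U_q(\Lambda,\widetilde B)}M$ is a finitely generated $\mathcal D_{\Lambda_{C_i},q}(\bold k)$-module, and it is holonomic by the very definition of a holonomic object of $\overline{\mathcal C}_q(\Lambda,\widetilde B)$; note that $\Lambda_{C_i}$ is obtained from $\Lambda$ by the unimodular substitution $E_k$ at each mutation, so it has the same rank $2r$ and Theorem \ref{t6b} (which allows degenerate $\Lambda$) applies to it.

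Next I would invoke Theorem \ref{t6b} for each $M_{C_i}$: it yields a nonempty Zariski-open $U_i\subset{\rm Spec}(R)$ such that ${\rm supp}(M_{C_i,\mathfrak m})\subset Y_{C_i}^{(1)}(B)$ is Lagrangian — i.e.\ coisotropic of dimension $r$ for the log-canonical bracket on that chart — for every $q$-torsion point $\mathfrak m\in U_i$. Setting $U:=\bigcap_{i=1}^{\ell}U_i$, which is again nonempty and open, I obtain a single generic locus of $q$-torsion points on which all of the pieces ${\rm supp}(M_{C_i,\mathfrak m})$ are simultaneously Lagrangian.

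Finally I would glue. By construction the closed subvariety ${\rm supp}(M_{\mathfrak m})\subset Y^{(1)}(B)$ restricts to ${\rm supp}(M_{C_i,\mathfrak m})$ on the chart $Y_{C_i}^{(1)}(B)$, the clutching maps identify the Poisson structures on the overlaps, and $C_1,\dots,C_\ell$ cover $Y^{(1)}(B)$. Since "of dimension $r$" and "coisotropic" are local conditions, and the Poisson bracket has constant rank $2r$ on the union of the charts, ${\rm supp}(M_{\mathfrak m})$ is Lagrangian for all $\mathfrak m\in U$, which is exactly the asserted conclusion for a generic $q$-torsion point.

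The argument is essentially routine once Theorem \ref{t6b} is in hand; the only points calling for a little care are (a) knowing that the Serre-quotient object $M$ and its $p$-support are intrinsically defined on the charts so that the local pieces genuinely glue — this is the observation made just before the theorem in the text — and (b) checking that "Lagrangian" on the glued cluster Poisson variety $Y^{(1)}(B)$, which need not carry finitely many symplectic leaves a priori, is detected chartwise. I expect (b) to be immediate from the fact that the Poisson rank is the constant $2r$ on the union of the toric charts, so that on this locus "Lagrangian" just means "coisotropic of dimension $r$", matching the chartwise output of Theorem \ref{t6b}; thus no genuinely new obstacle arises beyond Theorem \ref{t6b} itself.
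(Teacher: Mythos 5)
Your proposal is correct and follows the same route as the paper, which simply states that Theorem \ref{t6b} immediately implies the result after the preceding discussion of how the chartwise $p$-supports glue; your write-up just makes explicit the routine steps (finitely many charts by quasi-compactness, intersecting the finitely many generic loci, and locality of the Lagrangian condition) that the paper leaves implicit.
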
 

\begin{remark} 1. Theorem \ref{t6c} straightforwardly generalizes to the setting when some of the frozen variables are not inverted.

2. It would be interesting to strengthen Theorem \ref{t6c} so that it applies to actual finitely generated $\bold U_q$-modules, not just objects of the Serre quotient, with $p$-support possibly contained in the Frobenius twist of the complement of the toric charts, $Y\setminus Y^\circ$, since taking the Serre quotient actually kills the most interesting modules. The methods of this paper are too crude to do this in general, but one can possibly treat the case of multiplicative Higgs branches (quasi-Hamiltonian reductions from $q$-differential operators on a vector space, such as multiplicative quiver varieties)
similarly to how we proposed to treat ordinary Higgs branches in Subsection \ref{symsing}.  
\end{remark}

\end{document}